\numberwithin{equation}{section}
\theoremstyle{plain} 
\newtheorem{thm}{Theorem}[section]
\newtheorem{seur}[thm]{Corollary}
\newtheorem{lem}[thm]{Lemma}
\theoremstyle{remark}
\newtheorem{exa}[thm]{Example}
\newtheorem{defn}[thm]{Definition}
\newtheorem{rem}[thm]{Remark}
\newtheorem{algo}[thm]{Algorithm}
\newcommand{\X}{\textbf{X}}
\newcommand{\Uv}{\textbf{U}}
\newcommand{\uv}{\textbf{u}}
\newcommand{\x}{\textbf{x}}
\newcommand{\y}{\textbf{y}}
\newcommand{\z}{\textbf{z}}
\newcommand{\vv}{\textbf{v}}
\newcommand{\av}{\textbf{a}}
\newcommand{\bv}{\textbf{b}}
\newcommand{\RR}{\mathbb R}
\newcommand{\Ss}{\mathbb S}
\newcommand{\PP}{\mathbb P}
\newcommand{\EE}{\mathbb E}
\newcommand{\ind}{\mathds{1}}
\newcommand{\dist}{\mathrm{dist}}
\begin{document}

\title{On the identification of the riskiest directional components from multivariate heavy-tailed data}
\author{Miriam Hägele and Jaakko Lehtomaa}

\maketitle

\begin{abstract}
    In univariate data, there exist standard procedures for identifying dominating features that produce the largest observations. However, in the multivariate setting, the situation is quite different. This paper aims to provide tools and methods for detecting dominating directional components in multivariate data.
    
    We study general heavy-tailed multivariate random vectors in dimension $d\geq 2$ and present procedures which can be used to evaluate why the data is heavy-tailed. This is done by identifying the set of the riskiest directional components. The results are of particular interest in insurance when setting reinsurance policies and in finance when hedging a portfolio of multiple assets.
\end{abstract}

{\bf Keywords:}
Multivariate data, heavy-tail distribution, dominating tail behaviour

{\bf Classification:}
60E05, 91B30, 91B28, 62P05

\section{Introduction}

It is not uncommon to find heavy-tailed features in multivariate data sets in insurance and finance \cite{Embrechts1,Peters1}. Since financial entities seek ways to reduce the total risks of their portfolios, it is necessary to understand what the main sources of risks are. Once this is known, one can seek optimal ways of reducing riskiness. In insurance, where the multivariate observations could consist of losses of different lines of business, the companies are typically interested in finding the best suited reinsurance policy. In finance, where the data could consist of returns of multiple assets, the aim could be to find an optimal hedging strategy against large losses. Even though multivariate heavy-tailed distributions are encountered frequently in various applications, there does not exist a general framework for analysis. 

In earlier research, several models for what a heavy-tailed random vector should mean have been introduced \cite{Cline1, Omey1, Resnick1,Samorodnitsky}. One of the simplest ways to model the situation is to present a $d$-dimensional random vector $\X$ in polar coordinates using the length $R$ of the vector and the directional vector $\Uv$ on the unit sphere $\Ss^{d-1}$ so that $\X=R\Uv$. Here, perhaps the simplest assumption would be that all directions are equally likely, i.e.\ the random vector $\Uv$ has a uniform distribution on the unit sphere. In realistic models, some directions are more likely than others specifically when $R$ is large. Efforts have been made to capture the heterogeneity in the distribution of $\Uv$ by studying, for example, the set of elliptic distributions \cite{Hult5, Kluppelberg2, Li2}. However, there are typical data sets that do not fit well to these assumptions because the support of $\Uv$ is concentrated into a cone and does not cover the entire set $\Ss^{d-1}$. There appears to be a need for models that allow the heaviness of the tail to vary in different directions as in \cite{Weng1} but which are not restricted by a parametric class of distributions. Ideally, the model should admit the tail of $R$ to have more general form than, say, only the form of a power function.

To answer the question on what causes the heavy-tailedness of multivariate data, one usually selects a suitable norm $\|\cdot\|$ and analyses the resulting one-dimensional distribution of $\|\X\|$ with standard procedures, such as the mean excess plot \cite{Das3, Ghosh1, Ghosh2} or alternative methods such as the ones in \cite{Asmussen3}. In practice, given the heaviness identified from the distribution of $\|\X\|$, the aim is to analyse the distribution of $\Uv$ and the dependence structure between $R$ and $\Uv$. If the conditional distribution exists, we can write
\begin{equation}\label{eq_taildef}
    \PP(R>k \ |\ \Uv=\x)= e^{-h(k,\x)}, \quad k\geq 0,
\end{equation}
where $h(\cdot,\x)$ is an increasing function for a fixed $\x \in \Ss^{d-1}$. In this setting, the problem is to identify which set of vectors $\x \in \Ss^{d-1}$ produces the heaviest or dominating conditional tails of the form \eqref{eq_taildef}. The dominating directions are the vectors $\x \in \Ss^{d-1}$ for which the function $h(k,\x)$ grows at the slowest rate, as $k\to \infty$. Presentation \eqref{eq_taildef} admits the study of a wide class of distributions.

We derive a presentation for the set of directional components that produce the heaviest tails and present a procedure to be used in practical analysis. The method applied to generated data returns a subset of the space which gives more information about the distribution than a single number or vector, but demands less data than a full empirical measure. Furthermore, the original data does not have to be pre-processed or transformed and the method is also applicable to data, where in some directions, the observations are sparse or observations do not occur at all. This is different from the grid-based approaches such as \cite{Lehtomaa3} where the entire space is first divided into cells and each cell is studied separately. In such approaches, there can exist empty or sparsely populated cells which might be problematic for analysis. The presented methods can be applied, in particular, to analyse financial data, where extremal observations typically appear in two opposite directions, which might be unknown.

\subsection*{Notation}
For a set $A$, we denote its closure by $\overline{A}$ or $\textrm{cl}(A)$, its interior by $\textrm{int}(A)$, its complement by $A^c$ and $\emptyset$ denotes an empty set. The symbol $A\backslash D$ means the set $A\cap D^c$. The term $A\subset D$ means $A$ is a subset of $D$ whereas $A\subsetneq D$ implies that $A$ is a proper subset of $D$. The notation $a(n)=o(b(n))$ refers to the little o-notation and means $a(n)/b(n)\to 0$, as $n\to\infty$. We use the convention $\log(0)=-\infty$.

\section{Assumptions and Definitions}
We write a $d$-dimensional random vector $\X$, where $d\geq 2$, in the form
\begin{equation}\label{eq_xdef}
 \X= R\Uv,
\end{equation}
where $R=\|\X\|$ and $\Uv=\frac{\X}{\|\X\|}$.
Here, we assume that the norm is an $l_p$ norm with $1\leq p<\infty$. The unit sphere is the set $\Ss^{d-1}=\{\x \in \RR^{d} \colon ||\x||=1\}$. Then, we have a geodesic metric on $\Ss^{d-1}$, defined in detail in terms of the geodesic distance in Section \ref{sec_geodesic}. In particular, the space $\Ss^{d-1}$ equipped with the geodesic metric is a complete metric space. Open sets, balls and other topological concepts on $\Ss^{d-1}$ are defined using this metric. For example, $B(\x,\varepsilon)\subset \Ss^{d-1}$ is an open ball with centre $\x$ and radius $\varepsilon>0$. The Borel sigma-algebra, the sigma-algebra generated by open sets, is denoted by $\mathcal{B}$, e.g.\ $\mathcal{B}(\Ss^{d-1})$. In the case of spherical or elliptical distributions, we restrict the norm to be the $l_2$-norm denoted by $\|\cdot\|_2$. This restriction ensures that ellipsoids have their natural interpretation. 

\subsection{Assumptions}
We need the following conditions in the formulation of the results. The required conditions are indicated in the assumptions of each result.
\begin{enumerate}[label=(A\arabic*)]
    \item \label{as1}
    $R$ is a positive random variable with right-unbounded support. For any $k\in \RR$, it holds that $\PP(R>k)>0$ and $R$ is heavy-tailed in the sense that $\lim_{k\to \infty} -\log(\PP(R>k))/k=0$.
    \item \label{as2} 
    $\Uv$ is a random vector on the unit sphere $\Ss^{d-1}$ such that the quantity $$\PP(\Uv\in A|R>k)$$
    remains constant for all $k>k_0$ where $k_0>0$ is a fixed number that does not depend on the Borel set $A \subset \Ss^{d-1}$. In particular, the limiting probability distribution of $\Uv\, |\, R>k$ exists, as $k\to\infty$. In fact, the limiting distribution on $\Ss^{d-1}$ is reached once $k>k_0$.
    \item \label{as3} 
    The limit $\lim_{k\to \infty}g(k,A)$ exists in $[0,\infty]$ for all Borel sets $A \subset \Ss^{d-1}$, where $g:(k_R,\infty)\times \mathcal{B}(\Ss^{d-1}) \to \RR$ is defined as
    \begin{equation}\label{eq_gdef}
    g(k,A) = \frac{\log(\PP(R>k,\Uv\in A))}{\log(\PP(R>k))}
    \end{equation}
    and $k_R=\sup\{k:\PP(R>k)=1\}.$
\end{enumerate}

\begin{rem}
The condition on heavy-tailedness in Assumption \ref{as1} is equivalent with the condition $\EE(e^{sR})=\infty$ for all $s>0$ which is the usual definition of a (right) heavy-tailed real-valued random variable $R$. The distribution of $\Uv$ does not have to be uniform on the unit sphere $\Ss^{d-1}$. In fact, the distribution of $\Uv$ does not even have to have probability mass in every direction. 
\end{rem}

The risk function of $R=\|\X\|$ is defined as 
\begin{displaymath}
h(k)=-\log(\PP(R>k)).
\end{displaymath}
The function $h$ provides a benchmark against which risk functions calculated from subsets can be compared. In this sense, the function $h$ indicates what the heaviness of the tail will be in the set of the riskiest directions $S$. The logarithmic transformation is used widely in asymptotic analysis. One reason for the use of this particular transformation of the tail function instead of some other transformation is that, roughly speaking, the function 
$$x\mapsto -\log \PP(R>x)$$ is concave for heavy-tailed $R$ and convex for light-tailed $R$. Assuming \ref{as1} indicates that we operate in the heavy-tailed regime.

Here, $S$ is the set
\begin{align}
    \Bigg\{\x \in \Ss^{d-1} \colon  & \lim_{k\to \infty} \frac{\log(\PP(R>k|\Uv\in  B(\x,\varepsilon)))}{\log(\PP(R>k))}=1, 
    \nonumber \\ 
    &\PP(\Uv\in B(\x,\varepsilon))>0,\,  \forall \varepsilon>0\Bigg\}.\label{eq_domset}
\end{align}

The limit in \eqref{eq_domset} can be written in equivalent forms.

\begin{lem}\label{lem_S}
Suppose \ref{as1}-\ref{as3} hold and $\PP(\Uv\in B(\x,\varepsilon))>0$. Then, 
\begin{equation}\label{eq_sconnectionlemma_eq1}
\lim_{k\to \infty}\frac{\log(\PP(R>k|\Uv\in  B(\x,\varepsilon)))}{\log(\PP(R>k))}=\lim_{k\to \infty} \frac{\log(\PP(R>k,\Uv\in  B(\x,\varepsilon)))}{\log(\PP(R>k))}.
\end{equation}
In addition, \eqref{eq_sconnectionlemma_eq1} can be written as
\begin{equation}\label{eq_sconnectionlemma_eq2}
\lim_{k\to \infty} \frac{\log(\PP(\Uv\in  B(\x,\varepsilon)|R>k))}{\log(\PP(R>k))}-1.
\end{equation}
\end{lem}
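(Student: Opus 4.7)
My plan is to reduce both equalities to the definition of conditional probability, supplemented by the single asymptotic fact that, under \ref{as1}, $\log\PP(R>k)\to -\infty$ as $k\to\infty$. This holds because $R$ is a random variable with right-unbounded support, so $\PP(R>k)\downarrow 0$; the divergence of $\log\PP(R>k)$ will let any finite constant appearing in a numerator become negligible after division.

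For \eqref{eq_sconnectionlemma_eq1}, I would start from
\[
\PP(R>k\mid \Uv\in B(\x,\varepsilon)) = \frac{\PP(R>k,\,\Uv\in B(\x,\varepsilon))}{\PP(\Uv\in B(\x,\varepsilon))},
\]
take logarithms, and divide by $\log\PP(R>k)$. The resulting expression equals the right-hand ratio of \eqref{eq_sconnectionlemma_eq1} plus a correction $-\log\PP(\Uv\in B(\x,\varepsilon))/\log\PP(R>k)$. The correction has a finite numerator (by the hypothesis $\PP(\Uv\in B(\x,\varepsilon))>0$) and a denominator diverging to $-\infty$, so it vanishes in the limit. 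Assumption \ref{as3} ensures the right-hand limit exists, and equality of the two sides in \eqref{eq_sconnectionlemma_eq1} follows.

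For \eqref{eq_sconnectionlemma_eq2}, the analogous manipulation starting from
\[
\PP(\Uv\in B(\x,\varepsilon)\mid R>k) = \frac{\PP(R>k,\,\Uv\in B(\x,\varepsilon))}{\PP(R>k)}
\]
yields, after taking logarithms and dividing by $\log\PP(R>k)$, an exact algebraic identity linking the ratio inside \eqref{eq_sconnectionlemma_eq2} to the ratio inside \eqref{eq_sconnectionlemma_eq1}; this identity is valid for every $k$ with $0<\PP(R>k)<1$, so no asymptotic input is needed here, only rearrangement and passage to the limit. The only subtlety I foresee is checking that all logarithms are finite: this is provided by hypothesis for $\PP(\Uv\in B(\x,\varepsilon))$, by \ref{as1} for $\PP(R>k)$ at large $k$, and in degenerate cases by the paper's standing convention $\log 0=-\infty$. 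I do not expect any deeper obstacle.
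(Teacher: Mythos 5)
Your proposal is correct and follows the same route as the paper: \eqref{eq_sconnectionlemma_eq1} comes from the decomposition $\log\PP(R>k\mid\Uv\in B(\x,\varepsilon))=\log\PP(R>k,\Uv\in B(\x,\varepsilon))-\log\PP(\Uv\in B(\x,\varepsilon))$ together with $\log\PP(R>k)\to-\infty$, and \eqref{eq_sconnectionlemma_eq2} from the exact algebraic rearrangement obtained by writing $\PP(R>k,\Uv\in B(\x,\varepsilon))=\PP(\Uv\in B(\x,\varepsilon)\mid R>k)\,\PP(R>k)$ and dividing by $\log\PP(R>k)$. One small caution for when you actually carry out that rearrangement: it yields $g(k,B(\x,\varepsilon))=\frac{\log\PP(\Uv\in B(\x,\varepsilon)\mid R>k)}{\log\PP(R>k)}+1$, i.e.\ a $+1$, whereas the display \eqref{eq_sconnectionlemma_eq2} shows $-1$; the remark immediately after the lemma (that the limit there equals $0$ when \eqref{eq_sconnectionlemma_eq1} equals $1$) is consistent only with the $+1$ version, so the printed sign appears to be a typo and not a defect in your derivation.
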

\begin{proof}
Since $\PP(\Uv\in B(\x,\varepsilon))>0$ is a constant, it holds that
\begin{align*}\log(\PP(R>k|\Uv\in  B(\x,\varepsilon)))&=\log(\PP(R>k,\Uv\in  B(\x,\varepsilon)))-\log(\PP(\Uv\in  B(\x,\varepsilon)))\\
&\sim \log(\PP(R>k,\Uv\in  B(\x,\varepsilon))),
\end{align*}
as $k\to \infty$, and the claim of \eqref{eq_sconnectionlemma_eq1} is proved. The other claim follows if we multiply and divide the probability in the nominator of the latter presentation of \eqref{eq_sconnectionlemma_eq1} by $\PP(R>k)$.
\end{proof}
It is seen that the quantities in \eqref{eq_sconnectionlemma_eq1} are equal. However, the interpretations of the two forms are a bit different. The form on the left compares the decay rates of two tail functions. The form on the right does not have a tail function in its nominator, but this form makes monotony arguments and situations where $\PP(\Uv\in B(\x,\varepsilon))=0$ easier to handle.

If the quantities in \eqref{eq_sconnectionlemma_eq1} equal $1$, as they do in the definition of the set $S$, the quantity in \eqref{eq_sconnectionlemma_eq2} equals $0$. So, in principle, there are two ways in which a point can belong to the set $S$. A point belongs to $S$ if $B(\x,\varepsilon)$ has a positive probability under the limiting measure of Assumption \ref{as2} or if the limiting probability is $0$ but the function $-\log(\PP(\Uv\in  B(\x,\varepsilon)|R>k))$ grows to infinity slowly enough. Since we assume in \ref{as2} that the limit distribution is obtained after some $k_0$, the latter possibility is excluded by the assumptions.

Next, we prove general properties for the function $g$ defined in \eqref{eq_gdef}. To simplify notation, we denote in short
\begin{equation}\label{eq_isogdef}
G(A)=\lim_{k\to \infty} g(k,A) = \lim_{k\to \infty} \frac{\log(\PP(R>k,\Uv\in A))}{\log(\PP(R>k))}
\end{equation}
where $A\subset \Ss^{d-1}$ is a Borel set.

\begin{lem}\label{lem_g}
Suppose \ref{as1}-\ref{as3} hold. Then, the following properties hold for the function $g$ defined in \eqref{eq_gdef} and for the function $G$ defined in \eqref{eq_isogdef}. In the statements below, we assume that $k>k_R$ and $A\subset \Ss^{d-1}$ is a  Borel set.
\begin{enumerate}[label=(\roman*)]
    \item \label{point1_lemg} $g(k,\Ss^{d-1}) = 1$, $G(\Ss^{d-1})= 1$, $g(k,\emptyset) = \infty$ and $G(\emptyset)= \infty$.
    \item \label{point2_lemg} $g(k,A) \geq 1$ and $G(A)\geq 1$.
    \item \label{point3_lemg} Function $g$ is monotone in the sense that if $A\subset D\subset \Ss^{d-1}$, where $D$ is a Borel set, then $g(k,D)\leq g(k,A)$. In addition, $G(D)\leq G(A)$.
    \item \label{point4_lemg} Suppose $A_1,\dots, A_n\subset \Ss^{d-1}$ are Borel sets. Then, $G(A_1\cup\ldots\cup A_n)=\min(G(A_1),\ldots, G(A_n))$. In particular, $\min(G(A),G(A^c))=1$. 
\end{enumerate}
\end{lem}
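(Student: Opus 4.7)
The plan is to set everything up by first noting the sign facts that drive every computation: for $k>k_R$ we have $\PP(R>k)<1$, so $\log\PP(R>k)<0$, and by \ref{as1} together with $R$ being a.s.\ finite we have $\log\PP(R>k)\to -\infty$ as $k\to\infty$. All inequalities on $g$ arise from inequalities on the probabilities in the numerator followed by a sign-flip when dividing by the negative quantity $\log\PP(R>k)$, and every passage to $G$ is justified by \ref{as3}.

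I would dispose of \ref{point1_lemg} first: $\{R>k,\Uv\in\Ss^{d-1}\}=\{R>k\}$ gives $g(k,\Ss^{d-1})=1$ immediately, and with the convention $\log(0)=-\infty$ the empty-set case yields $-\infty$ divided by a negative number, hence $+\infty$. For \ref{point2_lemg} and \ref{point3_lemg} I would use the single observation that $A\subset D\subset\Ss^{d-1}$ implies $\PP(R>k,\Uv\in A)\leq\PP(R>k,\Uv\in D)\leq\PP(R>k)<1$; taking logs preserves this, and dividing by the negative number $\log\PP(R>k)$ reverses it, giving $g(k,D)\leq g(k,A)$; specialising $D=\Ss^{d-1}$ yields \ref{point2_lemg}. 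Passing to the limit under \ref{as3} transfers both statements to $G$.

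The main work is \ref{point4_lemg}. The bound $G(\bigcup_i A_i)\leq \min_i G(A_i)$ is free from \ref{point3_lemg}. For the reverse inequality I would apply the crude union bound
\begin{equation*}
\PP\Big(R>k,\Uv\in\bigcup_{i=1}^n A_i\Big)\leq n\max_{1\leq i\leq n}\PP(R>k,\Uv\in A_i),
\end{equation*}
take logarithms, and divide by $\log\PP(R>k)<0$. The sign-flip converts the $\max$ inside the log into a $\min$ of the ratios, yielding
\begin{equation*}
g\Big(k,\bigcup_{i=1}^n A_i\Big)\geq \frac{\log n}{\log\PP(R>k)}+\min_{1\leq i\leq n}g(k,A_i).
\end{equation*}
Here the additive term vanishes as $k\to\infty$ because $\log\PP(R>k)\to -\infty$, so \ref{as3} gives $G(\bigcup_i A_i)\geq \min_i G(A_i)$. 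Combining the two bounds yields equality; taking $A_1=A$, $A_2=A^c$ and invoking \ref{point1_lemg} gives $\min(G(A),G(A^c))=G(\Ss^{d-1})=1$.

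The one place where care is genuinely required is the sign-flip step in \ref{point4_lemg}: the $i^*$ achieving the maximum of $\PP(R>k,\Uv\in A_i)$ yields the \emph{least negative} logarithm, and division by the negative number $\log\PP(R>k)$ then produces the \emph{smallest} value among the $g(k,A_i)$, which is exactly the $\min$ we want. Nothing else in the lemma seems harder than bookkeeping.
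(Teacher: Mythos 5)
Your argument is correct and follows the same route as the paper's proof: the sign-flip observations drive parts (i)--(iii) exactly as the authors do, and for (iv) the two directions (subadditivity via (iii), and the union bound $\sum\leq n\max$ followed by the vanishing $\log n/\log\PP(R>k)$ term) combine to give equality. The only difference is presentational: the paper delegates the hard direction of (iv) to Lemma 1.2.15 of Dembo--Zeitouni, whereas you re-derive that fact inline from the crude bound $\PP(\bigcup A_i,R>k)\leq n\max_i\PP(A_i,R>k)$ -- but that is precisely the proof of the cited lemma, so the substance is identical.
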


\begin{proof}
\begin{itemize}
   \item[\ref{point1_lemg}] Since $\PP(R>k,\Uv\in\emptyset)=0$, it follows that $g(k,\emptyset)=\infty$ and $G(\emptyset)=\infty.$ The statements for $\Ss^{d-1}$ follow from the definition of $g$.
   \item[\ref{point3_lemg}] $A\subset D$ implies $\log(\PP(R>k,\Uv\in A))\leq \log(\PP(R>k,\Uv\in D))$. Dividing by the negative term $\log(\PP(R>k))$ yields the claims. 
   \item[\ref{point2_lemg}] Since $A\subset \Ss^{d-1}$, the statements follow from \ref{point3_lemg} and \ref{point1_lemg}.
   \item[\ref{point4_lemg}] Lemma 1.2.15 in \cite{Dembo1} combined with Assumption \ref{as3} yields
    \begin{eqnarray*}
    &&-G(A_1\cup\ldots\cup A_n)\\&\leq&\limsup_{k\to\infty} \frac{\log(\PP(R>k,\Uv\in A_1)+\ldots +\PP(R>k,\Uv\in A_n))}{-\log(\PP(R>k))} \\
    &=& \max\left(\limsup_{k\to\infty} \frac{\log(\PP(R>k,\Uv\in A_1))}{-\log(\PP(R>k))},\dots,
    \limsup_{k\to\infty} \frac{\log(\PP(R>k,\Uv\in A_n))}{-\log(\PP(R>k))}\right)\\
    &=& -\min\left(\lim_{k\to\infty}g(k,A_1),\dots,\lim_{k\to\infty}g(k,A_n)\right)\\
    &=& -\min (G(A_1),\ldots,G(A_n)).
    \end{eqnarray*}
    On the other hand, $G(A_1\cup\ldots\cup A_n)\leq G(A_i)$ for all $i=1,\ldots,n$ due to \ref{point3_lemg}. Thus, $G(A_1\cup\ldots\cup A_n)= \min( G(A_1),\ldots,G(A_n))$. The fact that $\min(G(A),G(A^c))=1$ follows from \ref{point1_lemg} because the sets $A$ and $A^c$ partition the set $\Ss^{d-1}.$
\end{itemize}
\end{proof}

In practical applications, the aim is to estimate $S$ from data. The set $S$ is a subset of the support of $\Uv$ in $\Ss^{d-1}$. The following results show that the set $S$ is not empty. 

\begin{lem}\label{lem_notempty}
Suppose \ref{as1}-\ref{as3} hold and $G(D)=1$ for a closed set $D\subset \Ss^{d-1}$. Then, $D\cap S\neq \emptyset$.
\end{lem}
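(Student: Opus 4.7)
The plan is to argue by contradiction, combining compactness of $D$ in the geodesic metric with the finite-union and monotonicity properties of $G$ collected in Lemma \ref{lem_g}.

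The first step is to show that if $\x \in D$ fails to belong to $S$, then there is a radius $\varepsilon_\x > 0$ with $G(B(\x,\varepsilon_\x)) > 1$. Negating the two simultaneous conditions in the definition \eqref{eq_domset} yields some $\varepsilon_\x>0$ with either $\PP(\Uv \in B(\x,\varepsilon_\x)) = 0$ — in which case $\PP(R>k,\Uv \in B(\x,\varepsilon_\x)) = 0$ for every $k$ and so $G(B(\x,\varepsilon_\x)) = \infty$ — or $\PP(\Uv\in B(\x,\varepsilon_\x)) > 0$ but the conditional-tail ratio in \eqref{eq_domset} is not equal to $1$; in this latter case, Lemma \ref{lem_S} identifies that ratio with $G(B(\x,\varepsilon_\x))$, and Lemma \ref{lem_g}\ref{point2_lemg} upgrades ``$\neq 1$'' to ``$> 1$''. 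In either case $G(B(\x,\varepsilon_\x)) > 1$.

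For the second step, note that $\Ss^{d-1}$ is compact in the geodesic metric, so the closed set $D$ is compact. Assuming $D \cap S = \emptyset$, the collection $\{B(\x,\varepsilon_\x)\}_{\x \in D}$ produced above is an open cover of $D$, so I extract a finite subcover $B(\x_1,\varepsilon_{\x_1}), \ldots, B(\x_n,\varepsilon_{\x_n})$. Lemma \ref{lem_g}\ref{point4_lemg} then gives $G\bigl(\bigcup_{i=1}^n B(\x_i,\varepsilon_{\x_i})\bigr) = \min_{i} G(B(\x_i,\varepsilon_{\x_i})) > 1$, while monotonicity (Lemma \ref{lem_g}\ref{point3_lemg}) applied to $D \subset \bigcup_i B(\x_i,\varepsilon_{\x_i})$ forces $G\bigl(\bigcup_i B(\x_i,\varepsilon_{\x_i})\bigr) \leq G(D) = 1$. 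The contradiction yields $D\cap S \neq \emptyset$.

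The only step that needs care is the dichotomy at the very beginning, since the two ways in which $\x\in S$ can fail have to be handled separately — in particular, one must remember that Lemma \ref{lem_S} is only available when $\PP(\Uv\in B(\x,\varepsilon_\x))>0$, and the degenerate case must be argued directly from the definition of $g$. Once past this, the remainder is a clean compactness-plus-finite-union reduction and I do not expect any further technical obstacle.
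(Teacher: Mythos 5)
Your proof is correct, and it takes a genuinely different route from the paper's. The paper establishes the result via a \emph{nested-partition} argument: it constructs a decreasing sequence of Borel sets $P_{1,i_1}\supset P_{2,i_2}\supset\ldots$ inside $D$, each with $G=1$ by Lemma~\ref{lem_g}\ref{point4_lemg}, with diameters shrinking to $0$; completeness of $(\Ss^{d-1},\dist)$ then furnishes a limit point $\x\in D$, and monotonicity forces $G(B(\x,\varepsilon))\leq 1$ for every $\varepsilon$, which (together with $G\geq 1$ and the positive-mass observation) places $\x$ in $S$. You instead argue by contradiction through \emph{compactness}: if $D\cap S=\emptyset$, every $\x\in D$ carries an open ball with $G>1$ (your dichotomy handling the $\PP=0$ degenerate case directly and the $\PP>0$ case via Lemma~\ref{lem_S} plus Lemma~\ref{lem_g}\ref{point2_lemg}); extracting a finite subcover of the compact set $D$, Lemma~\ref{lem_g}\ref{point4_lemg} makes the union have $G>1$, while Lemma~\ref{lem_g}\ref{point3_lemg} and $G(D)=1$ force $G\leq 1$, a contradiction. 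Both proofs pivot on the same finite-union identity $G(\cup A_i)=\min_i G(A_i)$, but your version trades the bookkeeping of a refinement sequence (choosing a $G=1$ piece at each stage, tracking the limit point, and verifying the limit point lies in $S$) for a one-shot cover-and-contradict argument; the trade-off is that you invoke compactness rather than mere completeness of $\Ss^{d-1}$, which is harmless here since the unit sphere with the geodesic metric is indeed compact. Your proof is arguably shorter and avoids the (unstated in the paper, but necessary) step of verifying that the intersection of the nested sets is nonempty and that the limit point satisfies the positive-probability clause in~\eqref{eq_domset}, which you handle cleanly and explicitly through the contradiction structure.
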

\begin{proof}
Suppose $\mathcal{P}_1,\mathcal{P}_2,\ldots$ is a sequence of finite partitions of $D$. Assume further that all sets of the partitions are Borel sets and, for $n\geq 1$, $\mathcal{P}_{n+1}$ is a refinement of $\mathcal{P}_{n}$ such that the maximal diameter of the sets in $\mathcal{P}_{n}$ converges to $0$, as $n\to \infty$. For example, we could use dyadic partitions intersected with $D$.

Suppose $n\geq 1$ is fixed and consider the partition $\mathcal{P}_n=\{P_{n,1},P_{n,2},\ldots, P_{n,m_n}\}$ where the number of sets in $\mathcal{P}_n$ is denoted by $m_n$. Based on Part \ref{point4_lemg} of Lemma \ref{lem_g}, we know that 
$\min(G(P_{n,1}),\ldots, G(P_{n,m_n}))=G(D)=1$. That is, there is $i_n \in \{1,\ldots,m_n\}$ such that $G(P_{n,i_n})=1$.

The partition $\mathcal{P}_{n+1}$ is assumed to be a refinement of $\mathcal{P}_{n}$. So, the set $P_{n,i_n}$ is possibly partitioned into smaller sets and there is a subset, say $P_{n+1,i_{n+1}}\subset P_{n,i_n}$, where $i_{n+1} \in \{1,\ldots,m_{n+1}\}$ which satisfies $G(P_{n+1,i_{n+1}})=1$. Recall that the maximal diameters of the partitioning sets are assumed to converge to $0$. We see that there is a sequence of sets $P_{1,i_1}\supset P_{2,i_2}\supset \ldots$, where $P_{j,i_j}\in \mathcal{P}_j$ and $G(P_{j,i_j})=1$, for all $j=1,2,\ldots.$ Because $\Ss^{d-1}$ equipped with the geodesic metric is a complete metric space, there must be a limit point in the sequence of the sets. Let us denote the limit point by $\x$. Because the set $D$ is closed, the limit point $\x\in D$.

Suppose $\varepsilon>0$ is fixed. Suppose $n$ is so large that the maximal diameter of the sets in Partition $\mathcal{P}_n$ is less than $\varepsilon$. Then, by construction, there exists a set $P_{n,i_n}\in \mathcal{P}_n$ such that $P_{n,i_n}\subset B(\x,\varepsilon)$. Then, by monotony property \ref{point3_lemg} of Lemma \ref{lem_g}, we get that 
$$G(B(\x,\varepsilon))\leq G(P_{n,i_n})=1 $$
and the claim is proved because $\x$ belongs to the set $S$ by the definition of $S$.
\end{proof}

\begin{seur}\label{seur_S}
The set $S$ defined in \eqref{eq_domset} is not empty.
\end{seur}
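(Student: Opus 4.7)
The plan is to deduce the corollary directly from Lemma \ref{lem_notempty} by taking $D=\Ss^{d-1}$. Since $\Ss^{d-1}$ is the whole ambient space, it is trivially closed (in itself, under the geodesic metric). Moreover, part \ref{point1_lemg} of Lemma \ref{lem_g} gives $G(\Ss^{d-1})=1$, so $D=\Ss^{d-1}$ satisfies the hypotheses of Lemma \ref{lem_notempty}.

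Applying Lemma \ref{lem_notempty} then yields $\Ss^{d-1}\cap S \neq \emptyset$, which is exactly $S\neq \emptyset$ since $S\subset \Ss^{d-1}$ by definition \eqref{eq_domset}.

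There is essentially no obstacle: all the work has already been done in establishing Lemma \ref{lem_notempty} (the compactness/completeness argument that builds a nested sequence of refining partitions, each containing a set with $G$-value $1$, and extracts a limit point). The corollary is just the observation that the whole sphere is a legitimate closed set to which that lemma applies. One could alternatively run the partitioning argument from scratch on $\Ss^{d-1}$, but it is cleaner to invoke the lemma.
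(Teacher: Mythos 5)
Your proof is correct and takes exactly the same route as the paper: invoke Lemma \ref{lem_notempty} with $D=\Ss^{d-1}$, using part \ref{point1_lemg} of Lemma \ref{lem_g} to supply $G(\Ss^{d-1})=1$. The only (welcome) difference is that you spell out why $\Ss^{d-1}$ is closed and why $\Ss^{d-1}\cap S\neq\emptyset$ reduces to $S\neq\emptyset$, details the paper leaves implicit.
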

\begin{proof}
Taking $D=\Ss^{d-1}$, it holds that $G(D)=1$ by Part \ref{point1_lemg} in Lemma \ref{lem_g} and thus the set $S$ is not empty by Lemma \ref{lem_notempty}.
\end{proof}
We make an assumption on the form of the set $S$ to rule out technically challenging cases that have little impact on practical applications.

\begin{enumerate}[label=(A4)]
    \item \label{as4} Firstly, we assume that there exists an open set in $S^c$. Secondly, we assume that $S$ in \eqref{eq_domset} can be written as 
    $$S=\overline{T_1}\cup T_2,$$ where $T_1$ is an open subset (possibly empty) of $\Ss^{d-1}$ and $T_2$ is a finite collection of individual points (possibly empty) of $\Ss^{d-1}$. We assume that each point in $T_2$ contains positive probability mass of the limit distribution of $\Uv\, |\, R>k$, as $k\to\infty$.
\end{enumerate}

\begin{rem} \label{rem_as4}
Assumption \ref{as4} implies that not all directions have the same riskiness. The assumption also ensures that $S$ does not contain continuous subsets in lower dimensions than $d-1$. It admits directly e.g.\ distributions where the riskiest direction is concentrated to a cone or a single vector. Even if the original distribution of $\X$ does not satisfy Assumption \ref{as4}, it is possible to construct a new approximating distribution by adding a small independent continuous perturbation to the vector $\Uv$ to obtain a new distribution that satisfies \ref{as4}. The perturbation could be, for example, a random variable that has the uniform distribution on a small ball.
\end{rem}

If there exists a joint density of $(R,\Uv)$ or if $\Uv$ is discrete, we can write the conditional risk or hazard function as
\begin{displaymath}
h(k,\x)=-\log(\PP(R>k \ |\ \Uv=\x)),
\end{displaymath}
where $h(k,\x)$ is a positive, increasing function for fixed $\x\in \Ss^{d-1}$. The notation admits presenting the conditional risk function in the following simplified way in typical cases. 
\begin{exa} \label{exa_conditional_risk}
\begin{enumerate}
    \item If the random vector $\X$ is elliptically distributed with $R=\|\X\|_2$ and $\Uv=\frac{\X}{\|\X\|_2}$, its conditional risk function can be written as 
    \begin{displaymath}
    \log(\PP(R>k \ |\ \Uv=\x))= c(\x) h(k).
    \end{displaymath}
    Here, $c \colon \Ss^{d-1} \to \RR$  is a function such that $c(\vv)=c(-\vv)$ holds for all $\vv\in \Ss^{d-1}$. The function $c$ is a continuous map of $\Ss^{d-1}$ to an interval. In general, we set $h(k)$ such that $\min c(\x) =1$, so $h(k)$ is the risk function in the riskiest direction. 
    In the special case, where the distribution of $\X$ is spherical $c(\x)$ is constant. 
    \item There can be different tail behaviour in different subsets of $\Ss^{d-1}$. If there exists a finite partition $A_1,\dots,A_m$ of $\Ss^{d-1}$ such that the risk function does change given $\x \in A_j$, we can write
    \begin{displaymath}
    h(k,\x) = \sum_{j=1}^m h_j(k)\ind(\x\in A_j),
    \end{displaymath}
    where $h_j(k)$ refers to the risk function in the direction of the set $A_j$.
\end{enumerate}
\end{exa}

\subsection{Subsets of the unit sphere}\label{sec_geodesic}
For $\x,\y\in\Ss^{d-1},$ we define the geodesic metric $\dist(\x,\y)$ by
$$\dist(\x,\y)=\{\textrm{length of the shortest geodesic connecting } \x \textrm{ and } \y\}.$$
In this metric, open balls are subsets of $\Ss^{d-1}$ denoted by $B(\x,r)$, where the vector $\x\in \Ss^{d-1}$ is the centre of the open ball and $r>0$. So $B(\x,r)=\{\y\in \Ss^{d-1}: \dist(\x,\y)<r\}$. The corresponding closed ball is denoted by $\overline{B}(\x,r)$. 
Note that the shape of the ball $B(\x,r)$ depends on the used norm.

\begin{defn}\label{def_delta-swelling}
For any set $A\subset \Ss^{d-1}$, we call the set 
\begin{displaymath}
A^\delta =\left\{\x\in \Ss^{d-1}: \dist(\x,A)<\delta\right\},
\end{displaymath}
the geodesic $\delta$-swelling of the set $A$. Here, $\dist(\x,A)$ is the geodesic distance of $\x$ to the set $A$, $\dist(\x,A)=\inf_{\av\in A}\dist(\x,\av)$.
\end{defn}

By
\begin{displaymath}
\dist_H(A,B)= \max\left\{\sup_{\av\in A}\dist(\av,B),\sup_{\bv\in B}\dist(\bv,A)\right\}
\end{displaymath}
we denote the Hausdorff distance, where $\dist(\av,B)$ is defined as in Definition \ref{def_delta-swelling}.

\section{Minimal set of riskiest directions} \label{sec_theory}
Our aim is to find the riskiest directions. We search for the minimal set that dominates the tail behaviour of the random vector $\X$ in the sense of \eqref{eq_domset}. To this end, we need to identify sets $A\subset \Ss^{d-1}$ for which, given $\delta>0$, the inequality
\begin{displaymath}
\log(\PP(R>k,\Uv\in A))>\log(\PP(R>k, \Uv \in (A^\delta)^c))
\end{displaymath}
holds for all $k$ large enough. The inequality demands a positive probability measure of $A$.

For the next result, we define the collection $\mathcal{A}$ of testing sets $A\subset  \Ss^{d-1}$ as follows. A set $A$ is an element of $ \mathcal{A}$ if $A$ is a finite union of open balls such that for all $\x\in A^c$ and for all $\varepsilon >0$ the open ball $B(\x,\varepsilon)$ contains an open ball $B$ that belongs to $A^c$. Note that the point $\x$ does not have to be in the set $B$. In particular, this guarantees that $A^c$ does not contain any isolated points. 

\begin{thm}\label{thm_minimalset}
Let $\X=R\Uv\in \RR^d, d\geq 2$, be such that Assumptions \ref{as1}-\ref{as4} hold. Set
\begin{equation}\label{eq_tildes}
\tilde{S} = \cap \left\{A\in \mathcal{A}:  \lim_{k\to \infty} g(k,A) <  \lim_{k\to \infty} g(k,A^c)\right\}.
\end{equation}
Then, $S=\textrm{cl}(\tilde{S})$, where $S$ is as in \eqref{eq_domset}. Furthermore, for all $\delta>0$,
\begin{equation}\label{claim1}
\lim_{k\to\infty}\frac{\log\left(\PP(R>k,\Uv\in \tilde{S}^\delta)\right)}{\log(\PP(R>k))}=1
\end{equation}
and 
\begin{equation}\label{claim2}
\lim_{k\to\infty}\frac{\log\left(\PP(R>k,\Uv\in (\tilde{S}^\delta)^c)\right)}{\log(\PP(R>k))}>1.
\end{equation}
\end{thm}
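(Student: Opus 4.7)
The plan is to reduce everything to the limit measure $\mu$ on $\Ss^{d-1}$ supplied by Assumption \ref{as2}, which satisfies $\PP(\Uv\in A\mid R>k)=\mu(A)$ for every Borel $A$ and every $k>k_0$. For such $k$ we then have $\PP(R>k,\Uv\in A)=\PP(R>k)\mu(A)$; since $\log\PP(R>k)\to-\infty$ by \ref{as1}, this immediately yields the dichotomy
\begin{displaymath}
G(A)=1\ \text{when}\ \mu(A)>0,\qquad G(A)=+\infty\ \text{when}\ \mu(A)=0.
\end{displaymath}
In particular $S=\textrm{supp}(\mu)$, and the admissible sets in \eqref{eq_tildes}, i.e.\ those $A\in\mathcal{A}$ with $G(A)<G(A^c)$, are exactly those with $\mu(A)=1$. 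My strategy is to sandwich $\tilde S$ between $T_1\cup T_2$ and $S$: combined with the decomposition in \ref{as4} this forces $\textrm{cl}(\tilde S)=\overline{T_1}\cup T_2=S$, and both \eqref{claim1} and \eqref{claim2} will then drop out of $S\subset\tilde{S}^\delta$ via a second application of the dichotomy.

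For $T_1\cup T_2\subset\tilde S$, fix any admissible $A$, so $\mu(A^c)=0$. If $\x\in T_2$, Assumption \ref{as4} gives $\mu(\{\x\})>0$, so $\x$ cannot lie in the $\mu$-null set $A^c$. If $\x\in T_1$, choose $\varepsilon>0$ with $B(\x,\varepsilon)\subset T_1\subset S=\textrm{supp}(\mu)$ and suppose toward contradiction that $\x\in A^c$; the thickness condition in the definition of $\mathcal{A}$ supplies a nonempty open ball $B\subset A^c$ with $B\subset B(\x,\varepsilon)\subset\textrm{supp}(\mu)$, and any nonempty open subset of the support carries positive $\mu$-mass, contradicting $\mu(A^c)=0$.

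For the reverse inclusion $\tilde S\subset S$, fix $\x\notin S$; since \ref{as4} makes $S$ closed, there is $\varepsilon_0>0$ with $\dist(\x,S)>\varepsilon_0$. Cover the compact set $S$ by finitely many open balls whose closures lie outside $\overline{B(\x,\varepsilon_0/2)}$, and, after a small perturbation of centres and radii to rule out tangential configurations, arrange the union $A$ so that its complement equals the closure of its interior. Then $A\in\mathcal{A}$, $\x\notin A$ and $\mu(A^c)\leq\mu(S^c)=0$, so $\x\notin\tilde S$. This construction is the principal technical obstacle: forcing a finite union of open balls to simultaneously cover $S$, miss a neighbourhood of $\x$, and be regular open (so as to satisfy the thickness clause of $\mathcal{A}$) is topologically fiddly because the geometry of geodesic balls on $\Ss^{d-1}$ depends on the underlying $l_p$-structure; in $l_2$ a single antipodal ball suffices, but in general a perturbation argument seems unavoidable.

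With both inclusions in hand, $\textrm{cl}(\tilde S)=\overline{T_1}\cup T_2=S$ by \ref{as4}, and since every point of $\textrm{cl}(\tilde S)=S$ is at zero distance from $\tilde S$ we obtain $S\subset\tilde{S}^\delta$ for every $\delta>0$. Hence $\mu(\tilde{S}^\delta)=1$ and $\mu((\tilde{S}^\delta)^c)=0$; the dichotomy instantly gives $G(\tilde{S}^\delta)=1$, which is \eqref{claim1}. The second equality forces $\PP(R>k,\Uv\in(\tilde{S}^\delta)^c)=0$ for $k>k_0$, so $g(k,(\tilde{S}^\delta)^c)=+\infty$ eventually and therefore $G((\tilde{S}^\delta)^c)=+\infty>1$, proving \eqref{claim2}.
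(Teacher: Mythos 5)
Your proof is correct, and it takes a genuinely cleaner route than the paper's. You collapse the whole argument onto the dichotomy $G(A)\in\{1,+\infty\}$ according to whether $\mu(A)>0$ or $\mu(A)=0$, which is indeed forced by the strong form of Assumption \ref{as2} (the conditional law equals $\mu$ exactly for all $k>k_0$, so $\PP(R>k,\Uv\in A)=\PP(R>k)\mu(A)$ eventually), together with the identification $S=\textrm{supp}(\mu)$; the paper itself acknowledges this reading in the discussion after Lemma \ref{lem_S} but does not exploit it in the proof. Instead, the paper argues directly through $g$ and $G$, using the monotonicity and finite subadditivity of Lemma \ref{lem_g} and the non-emptiness result of Lemma \ref{lem_notempty}: it handles $T_2$ via $\lim_k g(k,\{\x\})=1$ plus monotonicity, handles $\textrm{int}(S)$ via the thickness property of $\mathcal{A}$ in exactly the way you do (producing a small ball $B\subset A^c\cap\textrm{int}(S)$ with $G(B)=1$), and proves \eqref{claim1}--\eqref{claim2} by contradiction with Lemma \ref{lem_notempty} applied to the closed set $(\tilde{S}^\delta)^c$, rather than by your direct computation $\mu((\tilde{S}^\delta)^c)\leq\mu(S^c)=0$. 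The paper's machinery would survive a weakening of \ref{as2} under which $G$ could take finite values strictly between $1$ and $\infty$, whereas your argument leans entirely on the dichotomy; in return yours is shorter and shows that under the stated hypotheses \eqref{claim2} is actually an equality with $+\infty$. Finally, the construction of an admissible $A\in\mathcal{A}$ covering $S$ while avoiding a neighbourhood of $\x$ is sketched at the same level of detail in the paper (``we can construct a set $A\in\mathcal{A}$ $\ldots$''), so this is a shared rather than a new handwave; your explicit flagging of the regular-closedness requirement on $A^c$ and of the $l_p$-dependence of geodesic ball geometry is an improvement on the exposition rather than a gap relative to the paper.
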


\begin{proof}
The proof of the theorem is performed in steps.
\begin{enumerate}
    \item\label{part1} \textsc{We claim:} $S\subset \textrm{cl}(\tilde{S})$.
    
    Recall that under Assumption \ref{as4}, the set $S$ can be written as the union of $\overline{\textrm{int}(S)}$ and a finite number of individual points. We have two cases to cover.
    
    First, we study the case where the set $T_2$ of Assumption \ref{as4} contains a point. Suppose there is an individual point $\x\in \Ss^{d-1}$ such that
    \begin{equation}\label{eq_proofeq1}
    1=\lim_{k\to \infty} \frac{\log(\PP(R>k|\Uv= \x))}{\log(\PP(R>k))}=\lim_{k\to\infty}g(k,\{\x\}),
    \end{equation}
    where, the latter equation follows from Lemma \ref{lem_S}.
    Then, $\x$ cannot belong to $A^c$ for any set $A\in \mathcal{A}$ that satisfies the inequality in \eqref{eq_tildes}. To see this, assume in the contrary that $\x$ belongs to $A^c$ for some set $A\in \mathcal{A}$. Then, due to monotony mentioned in \ref{point3_lemg} in Lemma \ref{lem_g},
    $$\lim_{k\to\infty}g(k,A^c)\leq\lim_{k\to\infty}g(k,\{\x\})=1$$
    by Equality \eqref{eq_proofeq1}. Since $\lim_{k\to\infty}g(k,A)$ cannot be strictly less than $1$ by Part \ref{point2_lemg} in Lemma \ref{lem_g}, the inequality cannot be true if $\x$ is in the complement of a testing set $A\in \mathcal{A}$. We conclude that $\x$ must belong to all testing sets $A\in \mathcal{A}$ that satisfy the inequality in \eqref{eq_tildes}. So, $\x \in \tilde{S}$.
    
    Next, we consider the case where the set $T_1$ of Assumption \ref{as4} contains a point. Suppose $\x\in \textrm{int}(S)$. Then, there is a number $\varepsilon>0$ such that $B(\x,\varepsilon)\subset \textrm{int}(S)$. Let $A\in \mathcal{A}$. We show that $\x$ cannot be in $A^c$ for a set $A$ that satisfies the inequality in \eqref{eq_tildes}. Assume in the contrary that $\x\in A^c$. In this situation, we can find a small ball which is entirely in the intersection $B(\x,\varepsilon)\cap A^c$. To see this, let $\varepsilon'=\varepsilon/2$. By the definition of the testing sets in $\mathcal{A}$, the ball $B(\x,\varepsilon')$ contains another open ball, say $B(\x',\varepsilon'')$ which is contained in $A^c$. Since $B(\x,\varepsilon')\subset B(\x,\varepsilon)$, we see that $B(\x',\varepsilon'')\subset B(\x,\varepsilon)\cap A^c\subset \textrm{int}(S)\cap A^c$. So, because $\x'\in S$, the limit in \eqref{eq_domset} applied for radius $\varepsilon''$ states that
    $$\lim_{k\to \infty} \frac{\log(\PP(R>k|\Uv\in B(\x',\varepsilon'')))}{\log(\PP(R>k))}=\lim_{k\to\infty}                 g(k,B(\x',\varepsilon''))=1. $$
    So, due to monotony mentioned in \ref{point3_lemg} in Lemma \ref{lem_g},
    $$\lim_{k\to\infty}g(k,A^c)\leq\lim_{k\to\infty}g(k,B(\x',\varepsilon''))=1.$$
    In conclusion, any testing set $A\in \mathcal{A}$ that does not contain $\x$ cannot satisfy the inequality in \eqref{eq_tildes}. So, all testing sets that satisfy the inequality must contain $\x$. So, $\x \in \tilde{S}$.
    
    The above deductions imply, using the notation of Assumption \ref{as4}, that $\textrm{int}(S)\cup T_2 \subset \tilde{S}$ which implies $\textrm{cl}(\textrm{int}(S)\cup T_2)=S\subset \textrm{cl}(\tilde{S})$ and the claim is proved.
    
    \item \label{part2}\textsc{We claim:} $\textrm{cl}(\tilde{S})\subset S$. 
    
    The claim is equivalent with $S^c\subset \textrm{cl}(\tilde{S})^c$.
    Let $\x\in S^c$. Then, by the definition of $S$ in \eqref{eq_domset}, either there exists $\varepsilon>0$ such that $\PP(\Uv\in B(\x,\varepsilon))=0$ or there exists $\varepsilon>0$ such that
    \begin{eqnarray}\label{eq_proof}
    1&<&\lim_{k\to\infty}g(k, B(\x,\varepsilon)).
    \end{eqnarray}
    In the latter case, Inequality \eqref{eq_proof} also holds when $\varepsilon$ is replaced by any $\varepsilon'<\varepsilon$ due to the monotony, see \ref{point3_lemg} in Lemma \ref{lem_g}. 
    
    In order to show that $\x\in \tilde{S}^c$, it suffices to find one testing set $A\in \mathcal{A}$ such that $\x\in A^c$ and $A$ satisfies the inequality of \eqref{eq_tildes}. This is because the set $\tilde{S}$ is the intersection of all testing sets that satisfy the inequality. 
    
    Let the number $\varepsilon$ be such that $\PP(\Uv\in B(\x,\varepsilon))=0$ or \eqref{eq_proof} holds. Now, setting formally $A'=\Ss^{d-1}\backslash B(\x,\varepsilon)$ fulfils the inequality of \eqref{eq_tildes} but this $A'$ is not a member of the collection $\mathcal{A}$. However, we can construct a set $A\in \mathcal{A}$ using a finite number of open balls such that $A$ covers the set $\Ss^{d-1}\backslash B(\x,\varepsilon)$ but does not intersect the set $B(\x,\varepsilon/2)$. 
    
    With this set $A$, we have that 
    \begin{equation}\label{eq_proofeq2}
        \lim_{k\to \infty} g(k,A) <  \lim_{k\to \infty} g(k,A^c)
    \end{equation} because $A^c\subset B(\x,\varepsilon)$. More precisely, by monotony, 
    $$\lim_{k\to \infty} g(k,A^c) \geq   \lim_{k\to \infty} g(k,B(\x,\varepsilon))>1.$$ 
    The limit $\lim_{k\to \infty} g(k,A)$ equals $1$ by \ref{point4_lemg} in Lemma \ref{lem_g} and so Inequality \eqref{eq_proofeq2} holds. In conclusion, $\x$ belongs to the complement of this $A$ and consequently $\x\in \tilde{S}^c$. 
    We have shown $S^c \subset \tilde{S}^c$ which is equivalent with $\tilde{S}\subset S$ which implies $\textrm{cl}(\tilde{S})\subset \textrm{cl}(S)=S.$
    
    \item \textsc{We claim that \eqref{claim1} and \eqref{claim2} hold}.
    
    Let $\delta>0$ be fixed. By Parts \ref{part1}-\ref{part2} of the proof, we know that 
    \begin{equation}\label{eq_proofeq4}
        S\subset \tilde{S}^\delta.
    \end{equation}Assume in the contrary to Claim \eqref{claim2} that 
    \begin{equation}\label{eq_proofeq3}
    \lim_{k\to\infty}\frac{\log\left(\PP(R>k,\Uv\in (\tilde{S}^\delta)^c)\right)}{\log(\PP(R>k))}=1.
    \end{equation}
    In particular, the set $(\tilde{S}^\delta)^c$ is closed, so it must contain a point of $S$ by Corollary \ref{seur_S}. This impossible by \eqref{eq_proofeq4} and thus \eqref{eq_proofeq3} cannot hold as an equality. So, Inequality \eqref{claim2} and consequently, by \ref{point4_lemg} in Lemma \ref{lem_g}, Equality \eqref{claim1} hold.
    \qedhere
\end{enumerate}
\end{proof}

In the case of elliptically distributed random vectors, the minimal set that dominates the tail behaviour of the random vector might consist only of singletons that do not have probability mass. One can still approximate the distribution using the method of Remark \ref{rem_as4}. 
In this example, we use the $l_2$-norm.
\begin{exa}\label{exa_elliptic}
Let the random vector $\X$ be elliptically distributed such that $h(k,\uv)=c(\uv)h(k)$ where $c$ is a continuous function on $\Ss^{d-1}$ that achieves its minimum only at $\x$ and $-\x\in \Ss^{d-1}$ and assume that $\x$ and $-\x$ are points in the set $T_2$ of Assumption \ref{as4}. Then, for all $\varepsilon>0$, it holds that
\begin{displaymath}
\lim_{k\to\infty} g(k,(B(\x,\varepsilon)\cup B(-\x,\varepsilon))\cap\Ss^{d-1})<\lim_{k\to\infty} g(k,\Ss^{d-1}\backslash(B(\x,\varepsilon)\cup B(-\x,\varepsilon))).
\end{displaymath}
So, $S=\{\x,-\x\}$. Choosing the risk function of $R$ to be $h(k)$, the minimum $\min c(\vv)$ equals one.
\end{exa}

\begin{seur}\label{seur_AsupsetS}
Let $\X=R\Uv$ be such that Assumptions \ref{as1}-\ref{as4} hold and $A\subset \Ss^{d-1}$ is a Borel set. If there exists $\delta>0$ such that $\tilde{S}^\delta\subset A$, it holds that
\begin{equation} \label{eq_inequalityA}
    \lim_{k\to\infty} g(k,A)<\lim_{k\to\infty}g(k,A^c).
\end{equation}
\end{seur}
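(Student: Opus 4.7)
The plan is to deduce Corollary \ref{seur_AsupsetS} as a direct consequence of Theorem \ref{thm_minimalset} together with the monotony of $g$ established in Lemma \ref{lem_g}. The inclusion $\tilde{S}^\delta \subset A$ supplies both halves of the strict inequality \eqref{eq_inequalityA}: it controls $G(A)$ from above (via $\tilde{S}^\delta$) and $G(A^c)$ from below (via $(\tilde{S}^\delta)^c$).

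First, I would translate Claims \eqref{claim1} and \eqref{claim2} of Theorem \ref{thm_minimalset} into the concise language of the limit operator $G$ defined in \eqref{eq_isogdef}. Claim \eqref{claim1} says $G(\tilde{S}^\delta)=1$ and Claim \eqref{claim2} says $G((\tilde{S}^\delta)^c)>1$.

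Next, I would apply Part \ref{point3_lemg} of Lemma \ref{lem_g} to the hypothesis $\tilde{S}^\delta \subset A$, obtaining $g(k,A)\leq g(k,\tilde{S}^\delta)$ for all sufficiently large $k$, hence $G(A)\leq G(\tilde{S}^\delta)=1$. Combining with the lower bound $G(A)\geq 1$ from Part \ref{point2_lemg} of Lemma \ref{lem_g} then forces $G(A)=1$. Taking complements in the hypothesis gives $A^c\subset (\tilde{S}^\delta)^c$, so another application of Part \ref{point3_lemg} of Lemma \ref{lem_g} yields $G(A^c)\geq G((\tilde{S}^\delta)^c)>1$.

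Chaining these two conclusions produces
\begin{displaymath}
\lim_{k\to\infty} g(k,A) = G(A) = 1 < G(A^c) = \lim_{k\to\infty} g(k,A^c),
\end{displaymath}
which is precisely \eqref{eq_inequalityA}. There is no genuine obstacle here; the only point requiring minor care is ensuring that both claims of Theorem \ref{thm_minimalset} are invoked for the same $\delta$ supplied by the hypothesis, which is automatic since \eqref{claim1} and \eqref{claim2} hold for every $\delta>0$.
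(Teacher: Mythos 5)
Your proof is correct and takes essentially the same route as the paper's: the paper's one-line proof states that the claim follows from \eqref{claim1}, \eqref{claim2}, and the monotony of $g$, and you spell out exactly those deductions. The only superfluous step is invoking Part \ref{point2_lemg} of Lemma \ref{lem_g} to pin down $G(A)=1$ exactly; the chain $G(A)\leq G(\tilde{S}^\delta)=1<G((\tilde{S}^\delta)^c)\leq G(A^c)$ already yields \eqref{eq_inequalityA} without it.
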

\begin{proof}
The claim follows from \eqref{claim1} and \eqref{claim2} together with the monotony of the function $g$.
\end{proof}

\section{Towards estimators}\label{sec_estimator}
In this section, we introduce procedures that can be used to form estimators for the set $S$ based on data. We do not present such estimators explicitly here, but the results can be used as a theoretical basis for this work. Throughout the section, we use the $l_2$-norm, so the geodesic distance or great ball distance on the unit sphere $\Ss^{d-1}$ is defined as $\textrm{dist}(\x,\y)=\arccos(\x\cdot\y)$, where $\x\cdot\y$ is the dot product of $\x$ and $\y$. The metric $\text{dist}$ is discussed in detail for instance in Proposition 2.1 of \cite{Bridson}.

The following lemmas are auxiliary results for Theorem \ref{thm_estimator}.
\begin{lem}\label{lem_partition_S}
Let $\Ss^{d-1}$ be the unit sphere of $\RR^d$ equipped with the $l_2$-norm and for $\x,\y\in \Ss^{d-1}$ let $\textrm{dist}(\x,\y)=\arccos(\x\cdot\y)$ be the geodesic distance or the great ball distance on $\Ss^{d-1}$. We define $\overline{B}(\x,0)=\{\x\}$ for $\x \in \Ss^{d-1}$.

Then, for fixed $\x\in\Ss^{d-1}$ and $0<r\leq \pi$, the balls $B(\x,r)$ and $\overline{B}(-\x,\pi-r)$ partition the unit sphere.
\end{lem}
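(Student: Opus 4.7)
The plan is to exploit the single identity
\[
\dist(\x,\y) + \dist(-\x,\y) = \pi \quad \text{for all } \y\in\Ss^{d-1},
\]
which follows immediately from the formula $\arccos(-t) = \pi - \arccos(t)$ applied to $t=\x\cdot\y$ (noting that $(-\x)\cdot\y = -(\x\cdot\y)$). Everything in the lemma is a direct rewriting of this identity, so the proof is essentially a one-liner once we set it up carefully.

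First I would state and justify the identity above, being explicit that it uses only the definition $\dist(\x,\y)=\arccos(\x\cdot\y)$ and the elementary property of $\arccos$. Next, to prove that $B(\x,r)$ and $\overline{B}(-\x,\pi-r)$ are disjoint, I take $\y \in B(\x,r)$, so $\dist(\x,\y) < r$, whence by the identity $\dist(-\x,\y) = \pi - \dist(\x,\y) > \pi - r$; this means $\y \notin \overline{B}(-\x,\pi - r)$. To prove that their union is all of $\Ss^{d-1}$, I take any $\y \in \Ss^{d-1}$ with $\y \notin B(\x,r)$, so $\dist(\x,\y) \geq r$; the identity then yields $\dist(-\x,\y) \leq \pi - r$, i.e.\ $\y \in \overline{B}(-\x,\pi - r)$.

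The only delicate point, which is the place I would be most careful, is the boundary case $r = \pi$. Here one must use the convention $\overline{B}(-\x,0) = \{-\x\}$ stated in the lemma, and note that $B(\x,\pi) = \{\y \in \Ss^{d-1} : \arccos(\x\cdot\y) < \pi\} = \Ss^{d-1} \setminus \{-\x\}$ since the strict inequality fails only at the antipode. The general argument above still applies verbatim because the identity $\dist(\x,\y) + \dist(-\x,\y) = \pi$ forces $\dist(-\x,\y) = 0$ precisely when $\dist(\x,\y) = \pi$. I would mention this briefly at the end to confirm the endpoint behaves as claimed. Everything else is a routine manipulation of the two inequalities defining the balls, so I do not expect any real obstacle beyond bookkeeping of strict versus non-strict inequalities.
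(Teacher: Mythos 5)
Your proof is correct and rests on exactly the same observation as the paper's: the identity $\arccos(t)+\arccos(-t)=\pi$ applied to $t=\x\cdot\y$, which shows $\overline{B}(-\x,\pi-r)$ is precisely the complement of $B(\x,r)$ in $\Ss^{d-1}$. The only cosmetic difference is that you verify ``disjoint'' and ``covers'' separately while the paper directly computes the complement, and you add an explicit (but not strictly necessary) remark on the endpoint $r=\pi$.
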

\begin{proof}
Since $\x\in \Ss^{d-1}$, it holds $1=\|\x\|_2=\|-\x\|_2$ so $-\x\in \Ss^{d-1}$.
To prove the claim, we show that $\overline{B}(-\x,\pi-r)$ is the complement of $B(\x,r)$ in $\Ss^{d-1}$. Since $B(\x,r)=\{\y\in \Ss^{d-1}: \textrm{dist}(\x,\y)<r\}$ its complement is the set $\{\y\in \Ss^{d-1}:\textrm{dist}(\x,\y)\geq r\}$. The condition $\textrm{dist}(\x,\y)\geq r$ is by definition
\begin{equation}\label{cond_arccos}
    \arccos(\x\cdot\y)\geq r.
\end{equation}
Due to the fact that for all $x\in [-1,1]$, $\arccos(x)+\arccos(-x)=\pi$, the condition \eqref{cond_arccos} is equivalent with $\arccos(-\x\cdot\y) \leq \pi-r$ because $\x$ and $\y$ are unit vectors. The last expression can be written as $\textrm{dist}(-\x,\y)\leq \pi-r$. So $\overline{B}(-\x,\pi-r)$ is the complement of $B(\x,r)$ in $\Ss^{d-1}$.
\end{proof}

\begin{lem}\label{lem_apu}
Let $\Ss^{d-1}$ be the unit sphere of $\RR^d$ equipped with the $l_2$-norm and for $\x,\y\in \Ss^{d-1}$ let $\dist(\x,\y)=\arccos(\x\cdot\y)$ be the geodesic distance.

Then, for $\x,\y\in \Ss^{d-1}, \x\neq \y, \x\neq -\y$ there exists $\delta>0$ so that the intersection $B(\y,\dist(\x,\y)+\delta/2)\cap B(\x,\delta)$ contains an open set of $\Ss^{d-1}$ and $\overline{B}(\y,\dist(\x,\y)+\delta/2)^c\cap B(\x,\delta)$ contains an open set of $\Ss^{d-1}$.
\end{lem}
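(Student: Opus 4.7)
The plan is as follows. Set $r=\dist(\x,\y)$. Since $\x\neq \y$ and $\x\neq -\y$, we have $0<r<\pi$. Choose $\delta>0$ small enough that $r+\delta<\pi$; this is possible because $r<\pi$. Both sets in the statement are open subsets of $\Ss^{d-1}$, being finite intersections of open sets (recall that $\overline{B}(\y,r+\delta/2)^c$ is open). Hence it suffices to show that each of them is nonempty, since a nonempty open subset of $\Ss^{d-1}$ is itself an open set of $\Ss^{d-1}$.

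For the first intersection, the point $\x$ itself lies in $B(\y,r+\delta/2)\cap B(\x,\delta)$, because $\dist(\x,\y)=r<r+\delta/2$ and $\dist(\x,\x)=0<\delta$. In fact, by the triangle inequality, $B(\x,\delta/2)\subset B(\y,r+\delta/2)\cap B(\x,\delta)$, which is an open set of $\Ss^{d-1}$.

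For the second intersection, the idea is to move from $\x$ along the minimizing geodesic from $\y$ through $\x$, away from $\y$. Concretely, let $\gamma\colon[0,\pi]\to \Ss^{d-1}$ be the unit-speed great circle arc with $\gamma(0)=\y$ and $\gamma(r)=\x$; this exists and is unique up to orientation because $\x\neq -\y$. Put $\z=\gamma(r+3\delta/4)$. Since $r+3\delta/4<r+\delta<\pi$, the quantity $r+3\delta/4$ is still in the geodesic regime, so
\[\dist(\y,\z)=r+3\delta/4>r+\delta/2,\qquad \dist(\x,\z)=3\delta/4<\delta.\]
Thus $\z\in \overline{B}(\y,r+\delta/2)^c\cap B(\x,\delta)$, which establishes that this intersection is nonempty and hence contains an open set of $\Ss^{d-1}$.

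The only subtle point is the existence and uniqueness of the geodesic extension used to produce $\z$, which hinges on the assumption $\x\neq -\y$ together with the choice $r+\delta<\pi$; these conditions ensure that $\gamma$ is a genuine minimizing geodesic on the segment $[0,r+3\delta/4]$ so that $\dist(\y,\gamma(t))=t$ holds there. Once this is in place, the rest of the argument is an immediate application of the triangle inequality and of the openness of the sets involved.
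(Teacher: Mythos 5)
Your proof is correct and takes essentially the same approach as the paper: both extend along the great circle through $\x$ and $\y$ (the paper phrases it as the minimizing geodesic from $\x$ to $-\y$) to exhibit an explicit witness for each intersection, observing that the intersections are open so a single point suffices. Your one minor simplification is to use $\x$ itself (with the ball $B(\x,\delta/2)$) for the first intersection, whereas the paper instead takes a point at distance $\delta/4$ from $\x$ on that geodesic.
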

\begin{proof}
Since $\y\neq -\x$ and $\y\neq \x$ it holds that $0<\dist(\x,\y)<\pi$ due to the definition of the great ball distance. Take $\delta<\dist(-\x,\y)$. By the choice of $\delta$, $B(\y,\dist(\x,\y)+\delta/2)$, $B(\x,\delta)$ and $\overline{B}(\y,\dist(\x,\y)+\delta/2)^c$ are proper subsets of the unit sphere.
Since all three sets are open, it remains to show that there is a point in each of the intersections of the claim. We recover the points explicitly.

Since $\arccos(x)+\arccos(-x)=\pi$, it holds $\pi=\dist(\x,-\x)=\dist(\x,\vv)+\dist(-\x,\vv)$ for any $\vv\in \Ss^{d-1}.$ For $\y\neq \x, \y\neq -\x$ there exists a unique minimising geodesic between $\x$ and $\y$. 
For all points $\z$ that lie on this minimising geodesic between $\x$ and $-\y$ we have that $\dist(\x,-\y)=\dist(\x,\z)+\dist(\z,-\y)$.

Taking $\z$ on this geodesic such that $\dist(\z,\x)=\frac{1}{4}\delta$, it holds by definition that $\z\in B(\x,\delta)$. Because
\begin{eqnarray*}
\dist(\y,\z)&=&\pi-\dist(\z,-\y)=\pi-(\dist(\x,-\y)-\dist(\x,\z))\\
&=&\dist(\x,\y)+\frac{1}{4}\delta< \dist(\x,\y)+\delta/2,
\end{eqnarray*} 
we also have $\z  \in B(\y,\dist(\x,\y)+\delta/2)$. In conclusion, $\z \in B(\y,\dist(\x,\y)+\delta/2)\cap B(\x,\delta)$.

On the other hand, taking $\z'$ on the same minimising geodesic between $\x$ and $-\y$ such that $\dist(\z',\x)=\frac{3}{4}\delta$, it holds by definition that $\z'\in B(\x,\delta)$. By similar calculations as above, it holds that 
$$\dist(\y,\z')=\dist(\x,\y)+\dist(\x,\z')> \dist(\x,\y)+\delta/2,$$
so  $\z'\in \overline{B}(\y,\dist(\x,\y)+\delta/2)^c$ and hence $\z'\in \overline{B}(\y,\dist(\x,\y)+\delta/2)^c\cap B(\x,\delta)$.
Thus, both intersections contain a point and therefore an open set.
\end{proof}

\subsection{Algorithm}\label{sec_prelim}
In this section, we present an algorithm to find the minimal set $S$ that dominates the tail behaviour of the studied random vectors under suitable assumptions. To this end, we study the function $G$ defined in \eqref{eq_isogdef} for different sets. Due to Theorem \ref{thm_minimalset}, the minimal set $S$ in \eqref{eq_domset} that dominates the tail behaviour of the random vector is contained in the intersection of all testing sets that fulfil the inequality in Condition \ref{eq_tildes}. We present a theoretical procedure for finding the set of the riskiest directions. 

\begin{algo}
We start by defining a mapping $\vv \mapsto A_\vv$ where $A_\vv$ is an open set and $\vv$ is an element on the unit sphere $\Ss^{d-1}$. The algorithm for finding the minimal set $S$ that dominates the tail behaviour of the random vectors is presented in two steps.
\begin{enumerate}\label{algo}
    \item \label{step3} Let $\vv\in \Ss^{d-1}$. 
    
    If, for some $r<\pi$, it holds that $G(B(\vv,r)^c)>1$ and $G(B(\vv,r))=1$ then define $A_\vv= B(\vv,r_\vv)$, where   
        $r_{\vv}$ is the smallest radius fulfilling the condition
        \begin{equation}\label{eq_algo_1}
        \lim_{k\to\infty} g(k,B(\vv,r_\vv)^c)>1.
        \end{equation} 
        In other words,
        \begin{displaymath}
        r_\vv = \inf \{r>0: \lim_{k\to\infty} g(k,B(\vv,r)^c)>1\}
        \end{displaymath}
        and $B(\vv,r_\vv)$ is the smallest ball centered around $\vv$ that contains $S$.
        
    If $G(B(\vv,r)^c)=1$, for all $0<r<\pi$, set $A_\vv=\Ss^{d-1}$ and if $G(B(\vv,r)^c)>1$ for all $0<r<\pi$, set $A_\vv=\emptyset$. 
    \item Set
    \begin{eqnarray*}
    \hat{S} = \bigcap_{\vv\in \Ss^{d-1}} A_\vv.
    \end{eqnarray*}
\end{enumerate}
\end{algo}
The sets $A_\vv=B(\vv,r_\vv)$ are open balls and belong by definition to the set of testing sets $\mathcal{A}$. Furthermore, the entire unit sphere and the empty set are open sets as well and thus belong to $\mathcal{A}$. 
Due to Lemma \ref{lem_partition_S}, $B(\vv,r_\vv)^c=\overline{B}(-\vv,\pi-r_\vv)$, Equation \eqref{eq_algo_1} can be rewritten in equivalent form
$$\lim_{k\to\infty} g(k,\overline{B}(-\vv,\pi-r_\vv))>1.$$
So, $r_\vv$ is the smallest radius such that $G(\overline{B}(-\vv,\pi-r_\vv))>1$.

The following lemma shows the connection between the choice of the set $A_\vv$ and the vector $-\vv$ which points to the opposite direction of $\vv$.
\begin{lem} \label{lem_A_v}
Suppose Assumptions \ref{as1}-\ref{as4} hold. Then, it holds for any $\vv\in \Ss^{d-1}$ and its corresponding set $A_\vv$ defined in Algorithm \ref{algo} that $A_\vv = \Ss^{d-1}$ if and only if $-\vv\in S$. 
\end{lem}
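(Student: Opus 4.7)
The first move is to reformulate the condition $A_{\vv}=\Ss^{d-1}$ in terms of balls around $-\vv$. By construction in Algorithm \ref{algo}, $A_{\vv}=\Ss^{d-1}$ holds if and only if $G(B(\vv,r)^c)=1$ for every $0<r<\pi$. Applying Lemma \ref{lem_partition_S}, the complement $B(\vv,r)^c$ is exactly the closed ball $\overline{B}(-\vv,\pi-r)$. Writing $s=\pi-r$, the identity $A_{\vv}=\Ss^{d-1}$ is therefore equivalent to
\begin{displaymath}
G(\overline{B}(-\vv,s))=1 \quad \text{for all } 0<s<\pi.
\end{displaymath}
So the task reduces to showing that this family of equalities is equivalent to $-\vv \in S$.

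For the direction $-\vv \in S \Rightarrow A_{\vv}=\Ss^{d-1}$, I would use Lemma \ref{lem_S} to note that membership of $-\vv$ in $S$ is exactly $G(B(-\vv,\varepsilon))=1$ for every $\varepsilon>0$. Given any $0<s<\pi$, picking $\varepsilon\in(0,s)$ gives the inclusion $B(-\vv,\varepsilon)\subset \overline{B}(-\vv,s)$. The monotony of $G$ (Lemma \ref{lem_g}\ref{point3_lemg}) yields $G(\overline{B}(-\vv,s))\leq G(B(-\vv,\varepsilon))=1$, and combining this with $G\geq 1$ from Lemma \ref{lem_g}\ref{point2_lemg} gives equality.

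For the converse, suppose $-\vv \notin S$. By the definition \eqref{eq_domset} of $S$ (together with Lemma \ref{lem_S}), there exists some $\varepsilon>0$ for which either $\PP(\Uv\in B(-\vv,\varepsilon))=0$ or $G(B(-\vv,\varepsilon))>1$. In either case I would fix an $s$ with $0<s<\min(\varepsilon,\pi)$, so that $\overline{B}(-\vv,s)\subset B(-\vv,\varepsilon)$. In the zero-probability case, $\PP(R>k,\Uv\in \overline{B}(-\vv,s))=0$ for all $k$, making $g(k,\overline{B}(-\vv,s))=\infty$ by the convention $\log(0)=-\infty$; hence $G(\overline{B}(-\vv,s))=\infty>1$. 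In the case $G(B(-\vv,\varepsilon))>1$, monotony (Lemma \ref{lem_g}\ref{point3_lemg}) immediately gives $G(\overline{B}(-\vv,s))\geq G(B(-\vv,\varepsilon))>1$. Either way, setting $r=\pi-s\in(0,\pi)$ yields $G(B(\vv,r)^c)>1$, which puts us outside case 2 of Algorithm \ref{algo} and rules out $A_{\vv}=\Ss^{d-1}$.

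The only subtle point is the bookkeeping between open and closed balls when invoking monotony, and the step that handles the zero-probability branch; both are routine once the reformulation via Lemma \ref{lem_partition_S} is in place, so I do not anticipate a genuine obstacle.
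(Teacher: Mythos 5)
Your proof is correct and follows essentially the same route as the paper's: translate the algorithm's condition $A_{\vv}=\Ss^{d-1}$ via Lemma \ref{lem_partition_S} into a statement about balls around $-\vv$, invoke Lemma \ref{lem_S} to connect the definition of $S$ with the function $G$, and close the argument with the monotony and the lower bound $G\geq 1$ from Lemma \ref{lem_g}. If anything, you are slightly more explicit than the paper about the bookkeeping between open balls $B(-\vv,\varepsilon)$ and the closed balls $\overline{B}(-\vv,\pi-r)=B(\vv,r)^c$ that appear in the algorithm, and about handling the zero-probability branch via the convention $\log 0 = -\infty$; the paper's proof compresses these steps.
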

\begin{proof}
To prove $A_\vv=\Ss^{d-1}$ is equivalent to $-\vv\in S$, we show that $-\vv\in S$ implies $A_\vv=\Ss^{d-1}$ and $-\vv\notin S$ results in $A_\vv \subsetneq \Ss^{d-1}$.

Let $-\vv\in S$. Then, by the definition of $S$ for all $\varepsilon>0$ it holds that  $\PP(\Uv\in B(-\vv,\varepsilon))>0$ and 
\begin{displaymath}
\lim_{k\to\infty} \frac{\log(\PP(R>k|\Uv\in B(-\vv,\varepsilon)))}{\log(\PP(R>k))}=1
\end{displaymath}
which is equivalent to $G(B(-\vv,\varepsilon))=1$ due to Lemma \ref{lem_S}. As a consequence, the algorithm chooses $A_\vv=\Ss^{d-1}$ because there does not exist $\varepsilon>0$ such that $G(B(-\vv,\varepsilon))>1$ and $B(-\vv,\varepsilon)=\overline{B}(\vv,\pi-\varepsilon)^c$ by Lemma \ref{lem_partition_S}.

On the other hand, let $-\vv\notin S$. Then, either $\PP(\Uv\in B(-\vv,\varepsilon))=0$ for some $\varepsilon>0$ or for all $\varepsilon>0$ it holds that $\PP(\Uv\in B(-\vv,\varepsilon))>0$ and there exists $\varepsilon'>0$ such that
\begin{equation}\label{ineq1}
\lim_{k\to\infty} \frac{\log(\PP(R>k|\Uv\in B(-\vv,\varepsilon')))}{\log(\PP(R>k))}>1.
\end{equation}
In the first case, $\PP(\Uv\in B(-\vv,\varepsilon))=0$ implies $G(B(-\vv,\varepsilon))>1$ and in the second case, Inequality \eqref{ineq1} is equivalent to $G(B(-\vv,\varepsilon')) >1$ by Lemma \ref{lem_S}. 
Due to Lemma \ref{lem_partition_S}, $B(-\vv,\varepsilon')=\overline{B}(\vv,\pi-\varepsilon')^c$ so $G(\overline{B}(\vv,\pi-\varepsilon')^c)>1$ and by Lemma \ref{lem_g} it holds that $G(\overline{B}(\vv,\pi-\varepsilon'))=1$. Hence, $r_\vv\leq \pi-\varepsilon'$ and $A_\vv=B(\vv,r_\vv)\subsetneq \Ss^{d-1}$ or $A_\vv=\emptyset$.
\end{proof}

The estimator does not detect all possible sets. Recall, that in general we assume in \ref{as4} $S=\overline{T_1}\cup T_2$, where $T_1$ is an open subset of $\Ss^{d-1}$ and $T_2$ is a finite collection of individual points. For example, if $S=T_2$ or $S=\overline{T_1}\cup T_2$ and $T_2$ is not empty, Algorithm \ref{algo} will not detect the set $T_2$. 
If $S$ contains only a singleton, so $S=\{\vv\}$ for some $\vv\in \Ss^{d-1}$ it holds that $G(B(\vv,\varepsilon))=1$ and $G(B(\vv,\varepsilon)^c)>1$ for all $\varepsilon>0$. Then, the algorithm sets $A_\vv=\emptyset$ so the estimator is empty. In general, it can be seen that the algorithm does not detect any finite number of individual points in $S$. However, the procedure described in Remark \ref{rem_as4} can be used to modify data sets in order to avoid problems in practice.

The estimator $\hat{S}$ has the capacity to detect sets $S$ that are not necessarily convex or even connected. The set $S$ can be, for example, a disjoint union of open sets. 
\begin{exa}
A classical football is made of 12 black pentagons and 20 white hexagons. Assume that the directions $\Uv$ of the random vectors are uniformly distributed on the surface of the football. Furthermore, assume that random variables $R$ connected with open black pentagons have a much heavier tail then random variables $R$ connected with closed white hexagons such that $G(\textrm{white part of football})>1$ and $G(\textrm{black part of the football})=1$. If we choose $\vv\in \Ss^2$ such that $-\vv$ points in the centre of a black pentagon, it holds $G(B(-\vv,\varepsilon))=1$ for any $\varepsilon>0$, so the algorithm sets $A_\vv=\Ss^{2}$. If we choose $\vv\in \Ss^2$ such that $-\vv$ points in the centre of a white hexagon, it holds for its closed inscribed circle $\overline{B}(-\vv,r)$ that $G(\overline{B}(-\vv,r))>1$ so $A_\vv$ does not contain this closed inscribed circle and thus also $\hat{S}$ does not contain it. If we choose $\vv\in \Ss^2$ such that $-\vv$ is the centre of an edge of two white hexagons, it holds that $G(\overline{B}(-\vv,r))>1$ where $r$ is half of the length of the edge. Therefore, the edge is not included in the set $A_\vv$ and thus the edge is not included in $\hat{S}$. All in all, the intersection of sets $A_\vv$ where $\vv$ are such that $-\vv$ points either in the direction of the centre of a white hexagon or in the direction of the centre of an edge between two white hexagons returns a set that is not connected. Taking the intersection over all $\vv\in\Ss^{2}$, Algorithm \ref{algo} would return the union of the closed black pentagons as $\hat{S}$.
\end{exa}

To avoid the problem with individual points, we make a simplifying assumption on $S$. 
\begin{thm}\label{thm_estimator}
Suppose Assumptions \ref{as1}-\ref{as3} and \ref{as4} with $S=\overline{T_1}$. In particular, the set $S$ does not contain any individual points.

Then, it holds that 
\begin{equation*}
    S=\textrm{cl}(\hat{S}),
\end{equation*}
where $\hat{S}$ is as in Algorithm \ref{algo}.
\end{thm}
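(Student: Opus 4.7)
My plan is to prove $S = \textrm{cl}(\hat{S})$ via the sandwich $\tilde{S} \subset \hat{S} \subset S$, where $\tilde{S}$ is the set from Theorem \ref{thm_minimalset}. Since $S$ is closed and $S = \textrm{cl}(\tilde{S})$ already, this chain forces $S = \textrm{cl}(\tilde{S}) \subset \textrm{cl}(\hat{S})$ and $\textrm{cl}(\hat{S}) \subset \textrm{cl}(S) = S$.

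For $\tilde{S} \subset \hat{S}$, I examine each $A_\vv$ in turn. If $-\vv \in S$ then Lemma \ref{lem_A_v} gives $A_\vv = \Ss^{d-1}$ and trivially $\tilde{S} \subset A_\vv$. If $-\vv \notin S$, I first rule out the degenerate Case 3 of Algorithm \ref{algo} (where $A_\vv$ would equal $\emptyset$). Since $T_1$ is a nonempty open subset of the positive-dimensional sphere $\Ss^{d-1}$ (as $d \geq 2$), there exists $\y \in T_1$ with $\y \neq \vv$, and since $-\vv \notin S \supset T_1$ also $\y \neq -\vv$, so $\dist(\y,\vv) \in (0,\pi)$. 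Fix an open ball $B(\y,\varepsilon) \subset T_1$, set $r = \dist(\y,\vv)/2 \in (0,\pi)$, and pick any $\delta \in (0, \min(\varepsilon, r))$. Then $B(\y,\delta) \subset T_1 \cap B(\vv,r)^c$ and $\y \in S$ yields $G(B(\y,\delta)) = 1$; monotonicity (Part \ref{point3_lemg} of Lemma \ref{lem_g}) then forces $G(B(\vv,r)^c) = 1$, contradicting the defining inequality of Case 3. Hence $A_\vv = B(\vv, r_\vv)$ is a single open ball in $\mathcal{A}$, and by the construction of $r_\vv$ together with Part \ref{point4_lemg} of Lemma \ref{lem_g} one has $G(A_\vv^c) > 1 = G(A_\vv)$, so $A_\vv$ is one of the testing sets whose intersection defines $\tilde{S}$ in \eqref{eq_tildes}. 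Intersecting $\tilde{S} \subset A_\vv$ over all $\vv$ gives $\tilde{S} \subset \hat{S}$.

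For $\hat{S} \subset S$, take $\x \in S^c$ and set $\vv = -\x$. Since $-(-\x) = \x \notin S$, Lemma \ref{lem_A_v} excludes $A_{-\x} = \Ss^{d-1}$, and the Case 3 exclusion above (applied with $-\vv = \x \notin S$) excludes $A_{-\x} = \emptyset$. Thus $A_{-\x} = B(-\x, r_{-\x})$ with $r_{-\x} < \pi$, and $\dist(\x, -\x) = \pi > r_{-\x}$ shows $\x \notin A_{-\x}$, whence $\x \notin \hat{S}$. The main obstacle in this proof is the exclusion of Case 3 of the algorithm; this is precisely where the simplifying hypothesis $S = \overline{T_1}$ (no isolated-point component $T_2$) is used, and once it is secured the rest of the argument is a light packaging of the definitions of $A_\vv$ and $\tilde{S}$ together with Theorem \ref{thm_minimalset} and Lemmas \ref{lem_A_v} and \ref{lem_g}.
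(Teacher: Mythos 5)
Your chain $\tilde{S}\subset\hat{S}\subset S$ is an appealing idea, and your handling of $\hat{S}\subset S$ (choosing $\vv=-\x$) matches the paper's argument and is correct. The gap is in the first inclusion, at the step where you assert that $A_\vv = B(\vv,r_\vv)$ is one of the testing sets defining $\tilde{S}$, i.e.\ that $G(A_\vv^c)>1$. This does not follow from ``the construction of $r_\vv$.'' The radius $r_\vv$ is an \emph{infimum}, $r_\vv=\inf\{r>0: G(B(\vv,r)^c)>1\}$, and the infimum is generically not attained. Indeed, writing $\mu$ for the limiting distribution of $\Uv\,|\,R>k$ from Assumption \ref{as2}, one has under \ref{as2} that $G(D)>1$ exactly when $\mu(D)=0$; since $\mu(B(\vv,r)^c)=0$ for every $r>r_\vv$, a continuity-from-above argument gives $\mu(B(\vv,r_\vv)^c)=\mu(\{\x : \dist(\vv,\x)=r_\vv\})$. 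So $G(A_\vv^c)>1$ holds if and only if $\mu$ assigns zero mass to the geodesic sphere of radius $r_\vv$ about $\vv$. Nothing in Assumptions \ref{as1}--\ref{as4} with $S=\overline{T_1}$ rules out $\mu$ placing positive mass there (\ref{as4} forbids isolated points of $S$, not atoms or singular mass on a lower-dimensional subset of $\overline{T_1}$). In that case $A_\vv$ fails the defining inequality of \eqref{eq_tildes}, so ``$\tilde{S}\subset A_\vv$'' cannot be read off from the definition of $\tilde{S}$; and worse, if $\mu$ has an atom at a boundary extreme point $\x_0$ with $\dist(\vv,\x_0)=r_\vv$, then $\x_0\in\tilde{S}$ (every full-measure testing set must contain the atom) while $\x_0\notin B(\vv,r_\vv)=A_\vv$, so $\tilde{S}\not\subset\hat{S}$ and the sandwich actually breaks.

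The paper avoids this by not routing the first inclusion through $\tilde{S}$ at all. It proves $\textrm{int}(S)\subset\hat{S}$ directly: for $\x\in\textrm{int}(S)$ and an arbitrary $\vv$, it uses Lemma \ref{lem_apu} to find an open subset of $S$ inside $B(\vv,\dist(\x,\vv)+\delta/2)^c$, giving $G\bigl(B(\vv,\dist(\x,\vv)+\delta/2)^c\bigr)=1$ and hence $r_\vv\geq\dist(\x,\vv)+\delta/2>\dist(\x,\vv)$, so $\x\in A_\vv$. This lower bound on $r_\vv$ only needs the monotonicity of $G$ (and that $G(B(\vv,r)^c)=1$ forces $r\le r_\vv$), not attainment of the infimum or the value of $G$ at $r_\vv$ itself; the conclusion $S=\overline{\textrm{int}(S)}\subset\overline{\hat{S}}$ then follows because $S=\overline{T_1}$. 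To repair your proof you would either need to add a hypothesis that $\mu$ gives zero mass to every geodesic sphere (which the paper does not assume), or replace the ``$A_\vv$ is a testing set'' step with a direct lower bound on $r_\vv$ of the kind the paper uses. Your Case~3 exclusion argument is otherwise fine, and is a correct way to see $A_\vv\neq\emptyset$ when $-\vv\notin S$.
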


\begin{proof}
The proof of the theorem is performed in steps.
\begin{enumerate}
    \item We claim: $S\subset \textrm{cl}(\hat{S}).$
    
We show $\x\in \textrm{int}(S)$ implies $\x\in \textrm{cl}(\hat{S})$ and take then the closure of the sets to prove the claim.

Note first that, by Assumption \ref{as4}, $S$ is a proper subset of $\Ss^{d-1}$. Let $\x$ be in the interior of $S$. Then, there exists some $\delta>0$ such that $B(\x,\delta)\subset S$. Furthermore, for all $\varepsilon>0$ it holds that
\begin{displaymath}
\lim_{k\to\infty} \frac{\log(\PP(R>k|\Uv\in B(\x,\varepsilon)))}{\log(\PP(R>k))}=1
\end{displaymath}
which is equivalent to $G(B(\x,\varepsilon))=1$ due to Lemma \ref{lem_S}.

We need to show that $\x\in A_\vv$ for all $\vv\in \Ss^{d-1}$.
In the algorithm, the set $A_\vv$ can be the empty set, the unit sphere or an open ball with centre $\vv$. If $A_\vv=\Ss^{d-1}$, it contains $\x$ by default. 

Let $\vv\in \Ss^{d-1}$ be fixed. We show $\x \in A_\vv$. There are different cases to consider.
\begin{enumerate}
\item If $\vv=\x$, the set $B(\x,\delta/2)^c$ contains an open subset of $S$ since 
\begin{displaymath}
\emptyset \neq B(\x,\delta/2)^c \cap B(\x,\delta) \subset B(\x,\delta) \subset S
\end{displaymath}
so $G(B(\x,\delta/2)^c)=1$ and thus $r_\x\geq \delta/2$. It follows that $A_\x$ cannot be empty and $\x\in A_\x$.

\item If $\vv=-\x$, it holds that $A_\vv=\Ss^{d-1}$ by Lemma \ref{lem_A_v} so $\x\in A_{-\x}$.

\item \label{part_c} If $\vv\in B(\x,\delta), \vv\neq \x$ there exists some $\delta'>0$ such that $B(\vv,\delta')\subset B(\x,\delta)$ so both sets $B(\vv,\delta')$ and its complement contain an open subset of $S$ and thus $r_\vv\geq \delta'$ so $A_\vv\neq \emptyset$.

It holds by Lemma \ref{lem_apu} that both sets $B(\x,\delta)\cap B(\vv,\textrm{dist}(\x,\vv)+\delta/2)$ and $B(\x,\delta)\cap B(\vv,\textrm{dist}(\x,\vv)+\delta/2)^c$ are not empty. By the monotony of $G$, it holds that $G(B(\x,\delta)\cap B(\vv,\textrm{dist}(\x,\vv)+\delta/2)^c)\leq G(B(\x,\delta))=1$ so $r_\vv \geq \textrm{dist}(\x,\vv)+\delta/2$ and $\x\in A_\vv$.

\item If $\vv\notin B(\x,\delta), \vv\neq -\x$, there exists by Lemma \ref{lem_apu} $\delta'<\min(\delta, \dist(-\x,\vv))$ such that the intersections $B(\vv,\textrm{dist}(\x,\vv)+\delta'/2)\cap B(\x,\delta)$ and $B(\vv,\textrm{dist}(\x,\vv)+\delta'/2)^c \cap B(\x,\delta)$ are not empty and in particular contain both an open subset. Since $B(\x,\delta)\subset S$, it holds that $A_\vv$ cannot be empty. With similar deduction as in Part \ref{part_c} it follows that $\x\in A_\vv$.
\end{enumerate}

\item We claim: $\textrm{cl}(\hat{S})\subset S.$

 Let $\x\notin S$. We show that there exists $\vv$ such that $\x \notin A_\vv$. More specifically, we can choose $\vv=-\x$.
 
 If $\PP(\Uv\in B(\x,\varepsilon))=0$ for some $\varepsilon>0$ it holds that $G(B(\x,\varepsilon))>1$. Additionally, if $\PP(\Uv\in B(\x,\varepsilon))>0$ for all $\varepsilon>0$ it holds by the definition of $S$ in \eqref{eq_domset} that there exists some $\varepsilon>0$ such that 
\begin{equation}\label{ineq_notins}
\lim_{k\to\infty} \frac{\log(\PP(R>k|\Uv\in B(\x,\varepsilon)))}{\log(\PP(R>k))}>1.
\end{equation}
Inequality \eqref{ineq_notins} is equivalent to $G(B(\x,\varepsilon))>1$, as well. By Lemma \ref{lem_g}, it holds in both cases that $G(\overline{B}(\x,\frac{\varepsilon}{2}))>1$. The set $A_{-\x}$ is of the form $B(-\x,r_{-\x})$ with $r_{-\x}\leq \pi-\varepsilon/2$ and it is in particular not the set $\Ss^{d-1}$. Thus $\x \notin A_\vv$ when $\vv=-\x$. 
\end{enumerate}
\end{proof}

\section{Applications and examples}\label{sec_applications}

In practical applications one has only a finite number of observations. Thus, an approximation based on the theoretical algorithm is required. Next, we present a starting point for the formulation of estimators and study how they perform with data. 

We define an empirical version of the function $g$ defined in \eqref{eq_gdef}. Given observations $\x_1,\x_2,\dots, \x_n$ in $\RR^d$, the empirical version of $g$ is denoted by 
\begin{equation}\label{eq_g_hattu}
    \hat{g}(k,A)= \frac{\log\left(\sharp\{i: \|\x_i\|>k, \frac{\x_i}{\|\x_i\|} \in  A\}/n\right)}{\log\left(\sharp\{i: \|\x_i\|>k\}/n\right)},
\end{equation}
where $k>0$ and $A$ is a Borel set on the unit sphere.

In general, if
\begin{equation} \label{eq_testineq}
    \hat{g}(k,A)>1+c
\end{equation}
holds for a set $A$ and some $c>0$, it gives evidence for $A$ being in the complement of $S$. 
If
\begin{equation} \label{eq_testineq2}
   \hat{g}(k,A)<1+c
\end{equation}
we gain evidence for $A$ containing at least some subset of $S$. Since we can calculate the values of $\hat{g}$ for any set, the challenge is to perform the calculations in a systematic way and combine the results to form an estimate for $S$. The most practical choices for the sets $A$ appear to be open balls centered around a given point on the unit sphere.

\subsection{Simulation study}\label{sec_simulation}

A simulation study with $n=800000$ observations was performed. Assumptions $\ref{as1}-\ref{as4}$ are valid by construction. A two-dimensional data set was produced where heavy-tailed observations are possible in all directions, but some directions are heavier than others. The space was split into $8$ equally sized cones. In each cone, the radii of the observations have the same distribution and the directional components are uniformly distributed within each cone. The original data set is presented in Figure \ref{pic_orig}. The red lines indicate how the space is split. Two of the sectors have heavier Pareto distributed radial components and the rest have lighter Weibull distributed components. In Figure \ref{pic_orig}, the cones with Pareto distributed radial components are the ones with the largest observations measured in the $l_2$-norm.

\begin{figure}[htb]
     \centering
     \begin{subfigure}[b]{0.49\textwidth}
         \centering
         \includegraphics[width=\textwidth]{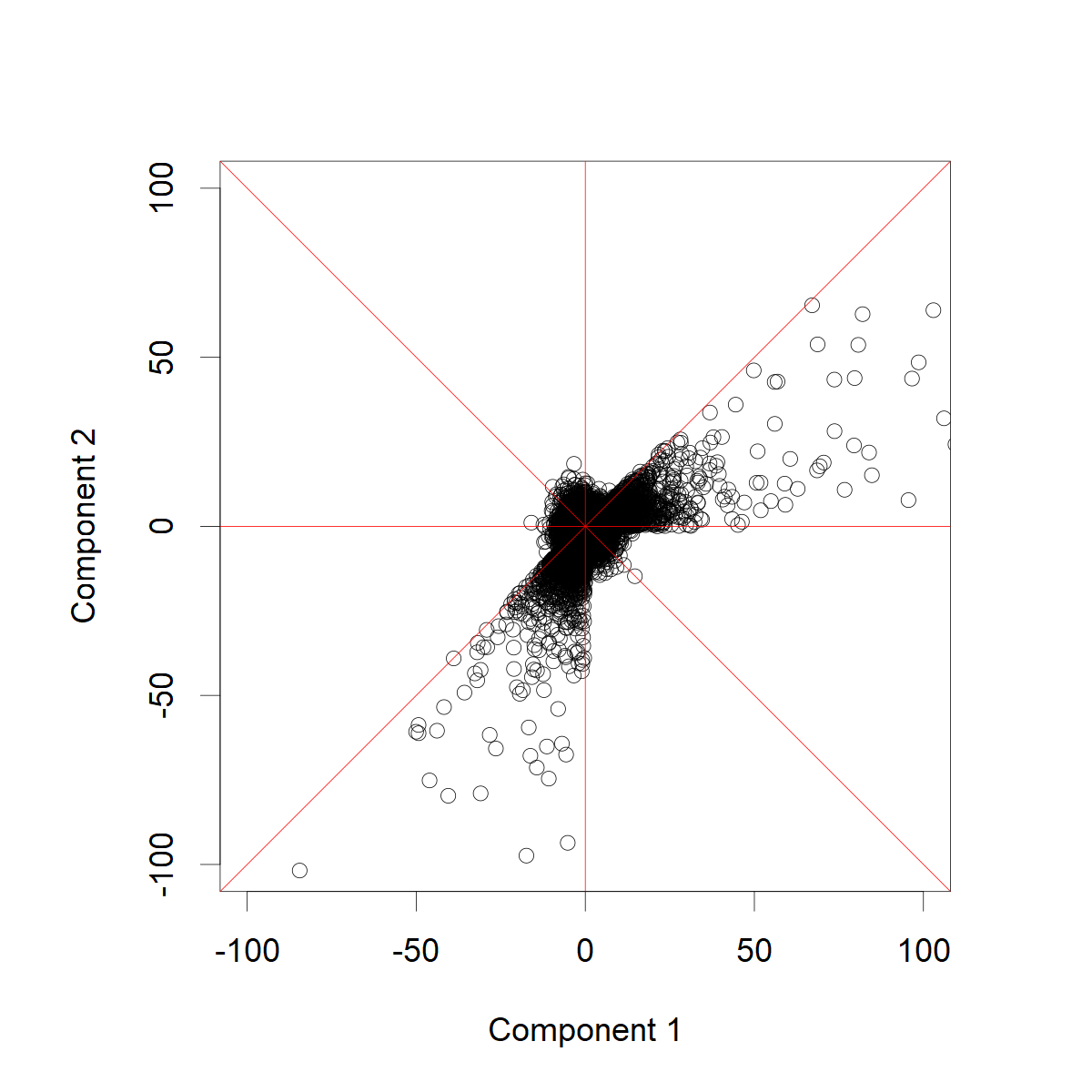}
         \caption{}
         \label{pic_orig}
     \end{subfigure}
     \hfill
     \begin{subfigure}[b]{0.49\textwidth}
         \centering
         \includegraphics[width=\textwidth]{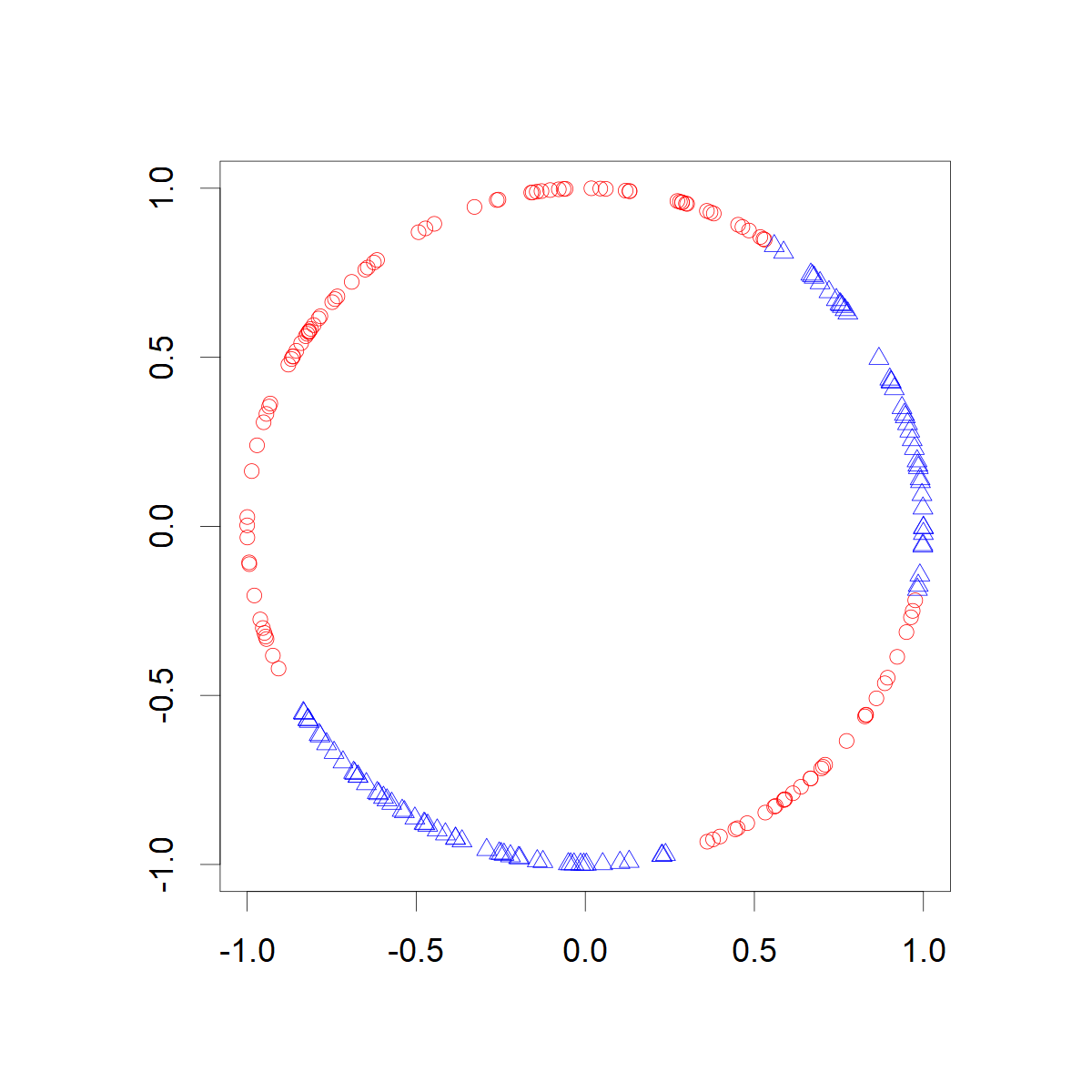}
         \caption{}
         \label{pic_estim}
     \end{subfigure}
        \caption{Illustration of simulated data. The original data set is presented in \ref{pic_orig}. Subfigure \ref{pic_estim} illustrates which directions are accepted to or rejected from the final estimate.}
        \label{fig:three graphs}
\end{figure}

The idea presented in Inequalities \eqref{eq_testineq} and \eqref{eq_testineq2} was studied numerically. The testing sets $D$ were chosen to be open balls of the form $B(\vv,s_\vv)$. The radius $s_\vv$ was selected to be the smallest number such that $10\%$ of all observations had directional components in $B(\vv,s_\vv)$. Figure \ref{pic_estim} presents the tested directions on the unit sphere. Each point corresponds to a fixed value of $\vv$. The red dots are the directions which were rejected, i.e.\ the value $\hat{g}(k,B(\vv,s_\vv))$ of \eqref{eq_g_hattu} is too high given the tolerance $c$ so that \eqref{eq_testineq} holds. The blue triangles are the accepted centers of cones from which the final estimate is formed. In the final estimate, we have removed all open balls that were rejected for some direction $\vv$. The values of the parameters were set to be $c=0.5$ and the top $0.5 \%$ of the observations, in $l_2$ norm, were used.

\begin{figure}[htb]
     \centering
     \includegraphics[width=\textwidth]{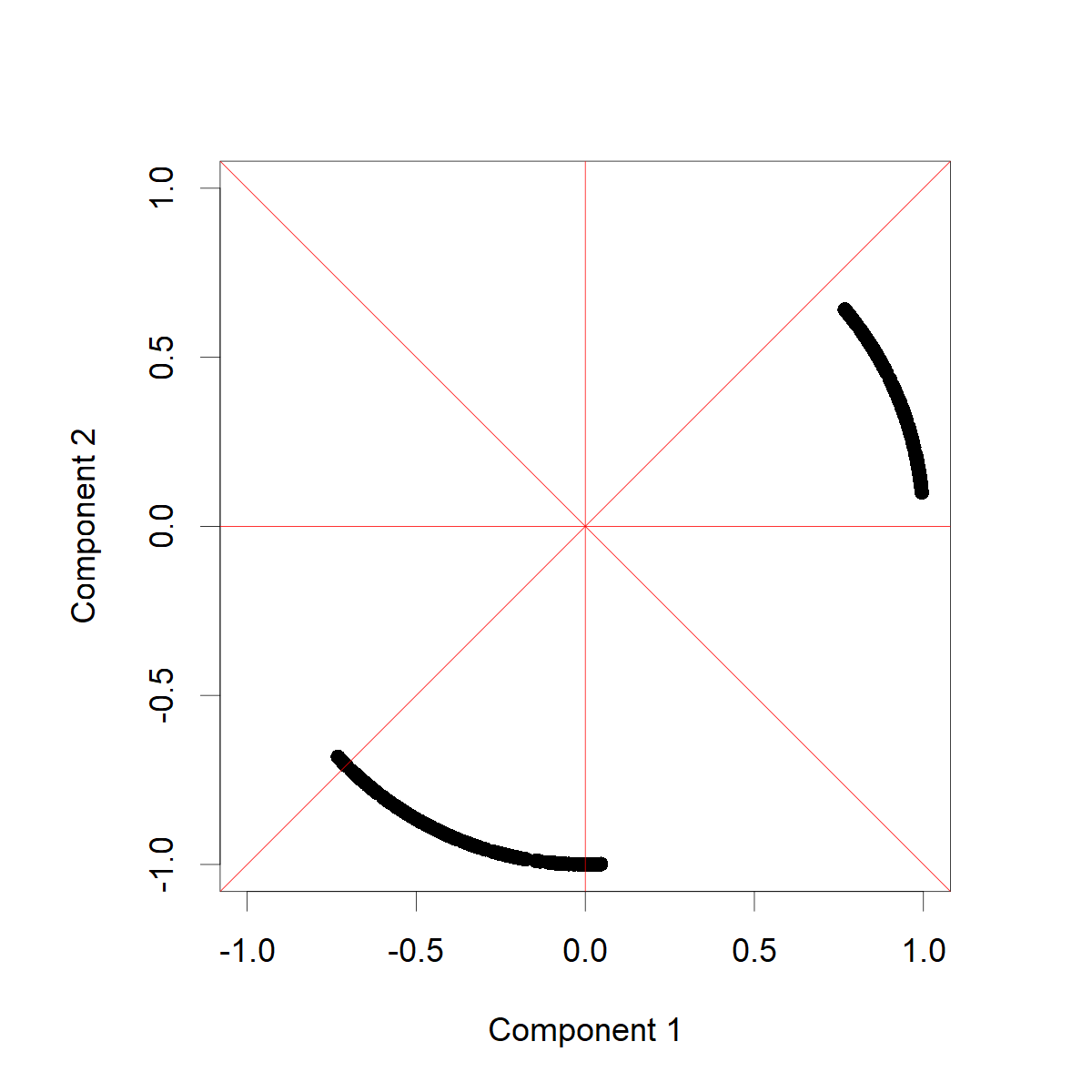}
         \caption{A preliminary estimate for $S$ based on numerical data.}
         \label{pic_results}
\end{figure}

In Figure \ref{pic_results}, the preliminary estimate for $S$ is presented on the unit sphere. The method identifies correctly the heaviest directions. The estimate seems to be most accurate near the centres of the cones and less accurate near the edges between heavier and lighter radial components.

\subsection{On the detection accuracy with Pareto tails}\label{sec_pareto_detection}

We used the same algorithm as in Section \ref{sec_simulation} to analyse a similar data set except that all the radial components have Pareto distribution. The Pareto index is the heaviest in the same cones as earlier. The remaining directions have lighter Pareto tails. 

\begin{figure}[h]
  \begin{subfigure}[b]{0.24\textwidth}
    \includegraphics[width=\textwidth]{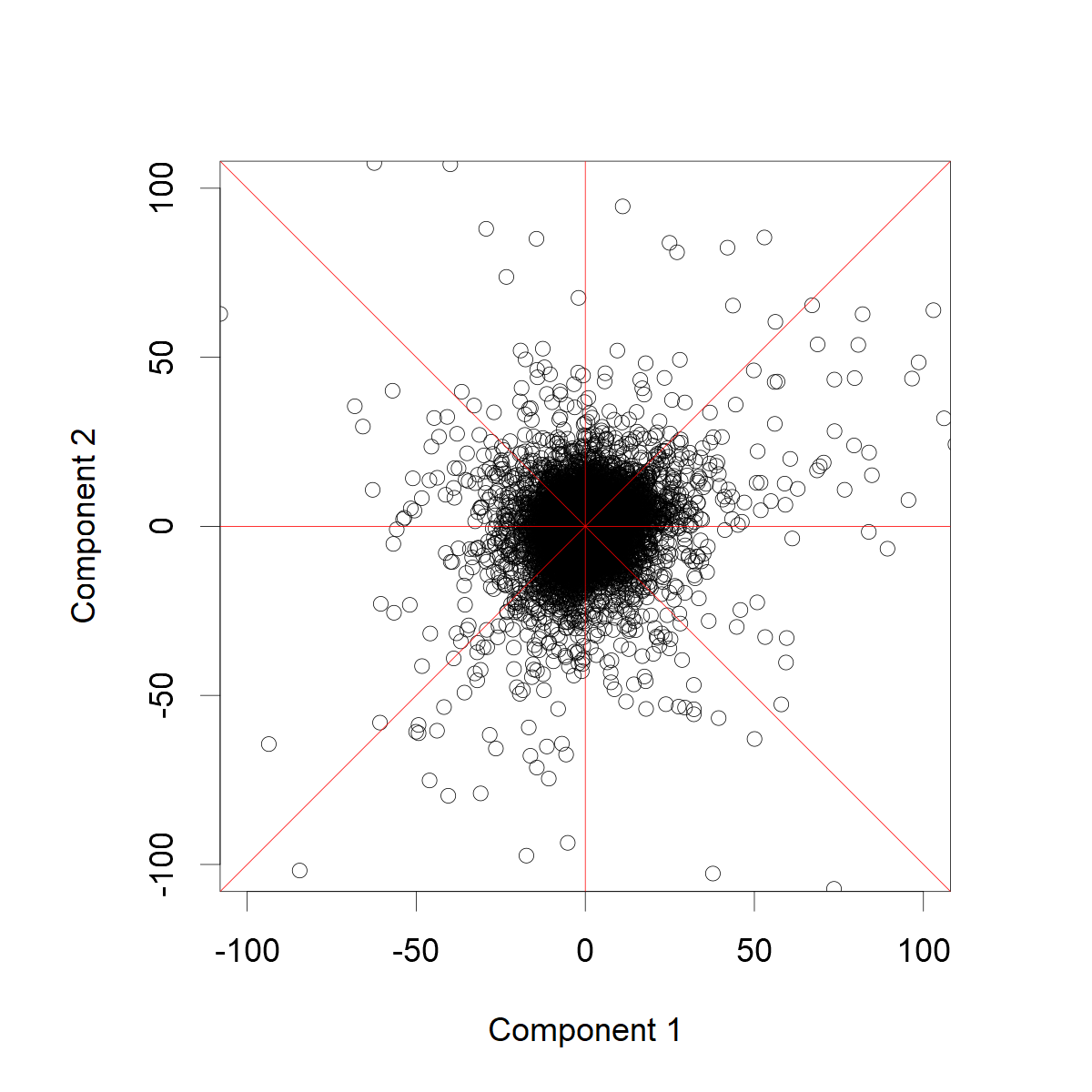}
    \caption{}
    \label{dmulti:1}
  \end{subfigure}
  \begin{subfigure}[b]{0.24\textwidth}
    \includegraphics[width=\textwidth]{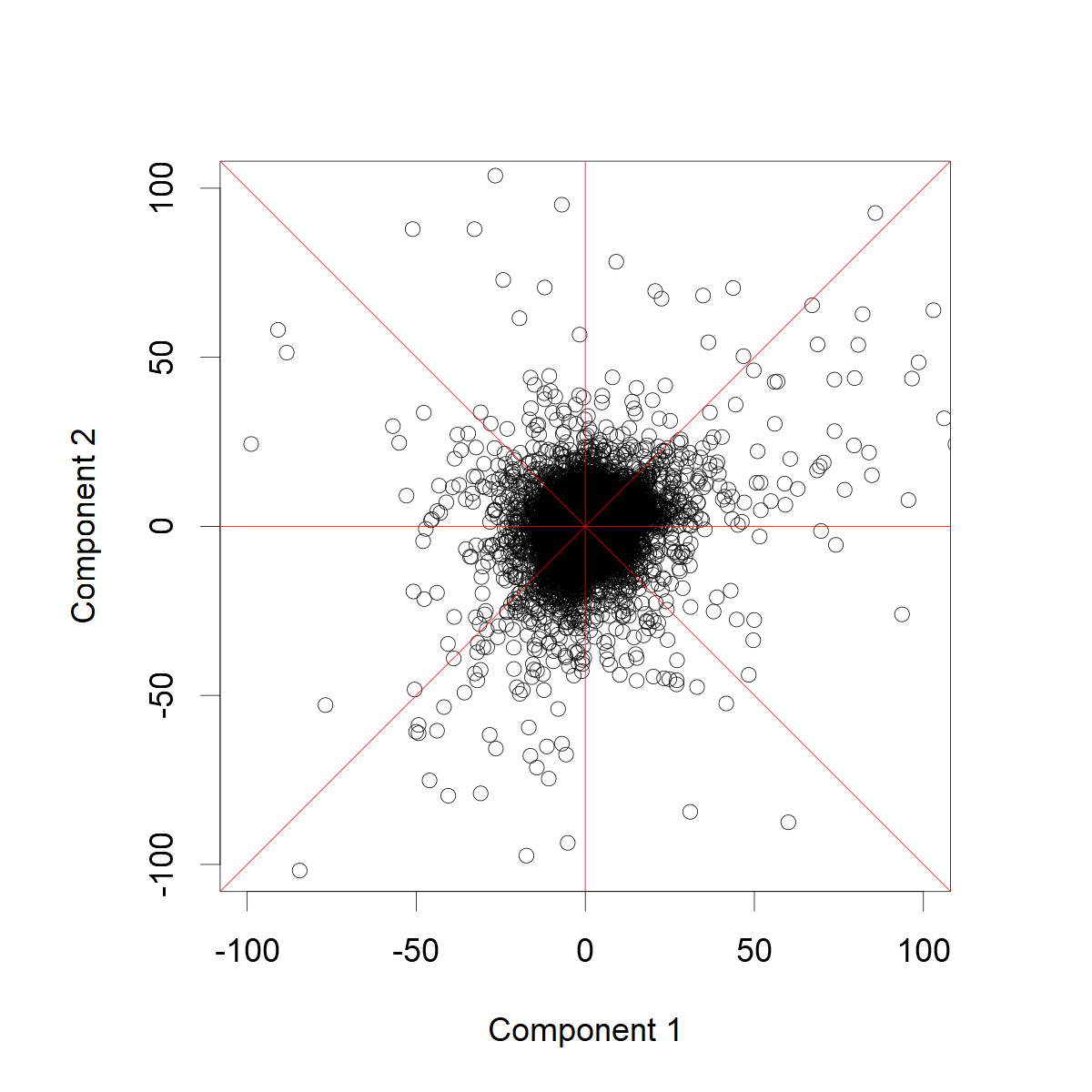}
    \caption{}
    \label{dmulti:2}
  \end{subfigure}
  \begin{subfigure}[b]{0.24\textwidth}
    \includegraphics[width=\textwidth]{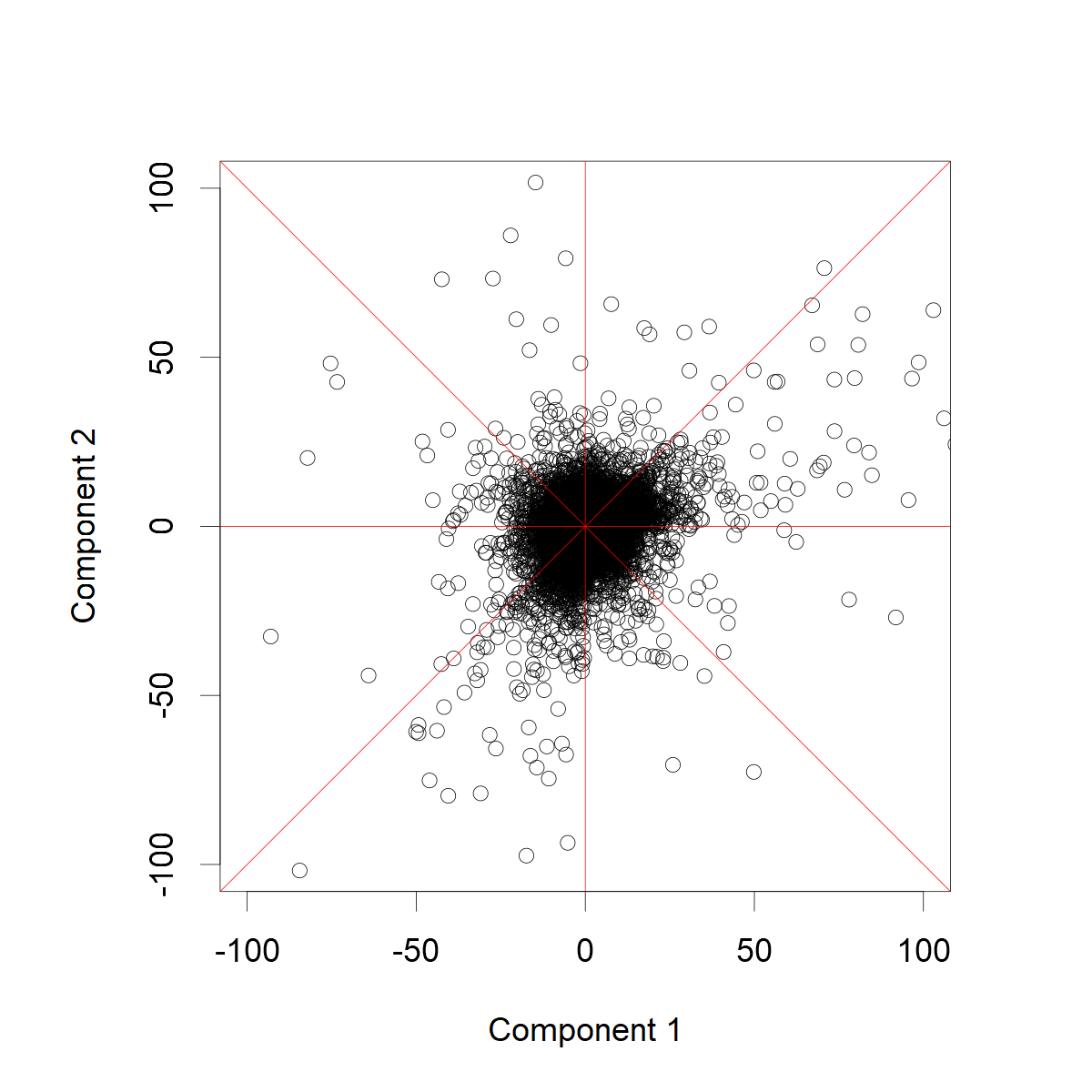}
    \caption{}
    \label{dmulti:3}
  \end{subfigure}
  \begin{subfigure}[b]{0.24\textwidth}
    \includegraphics[width=\textwidth]{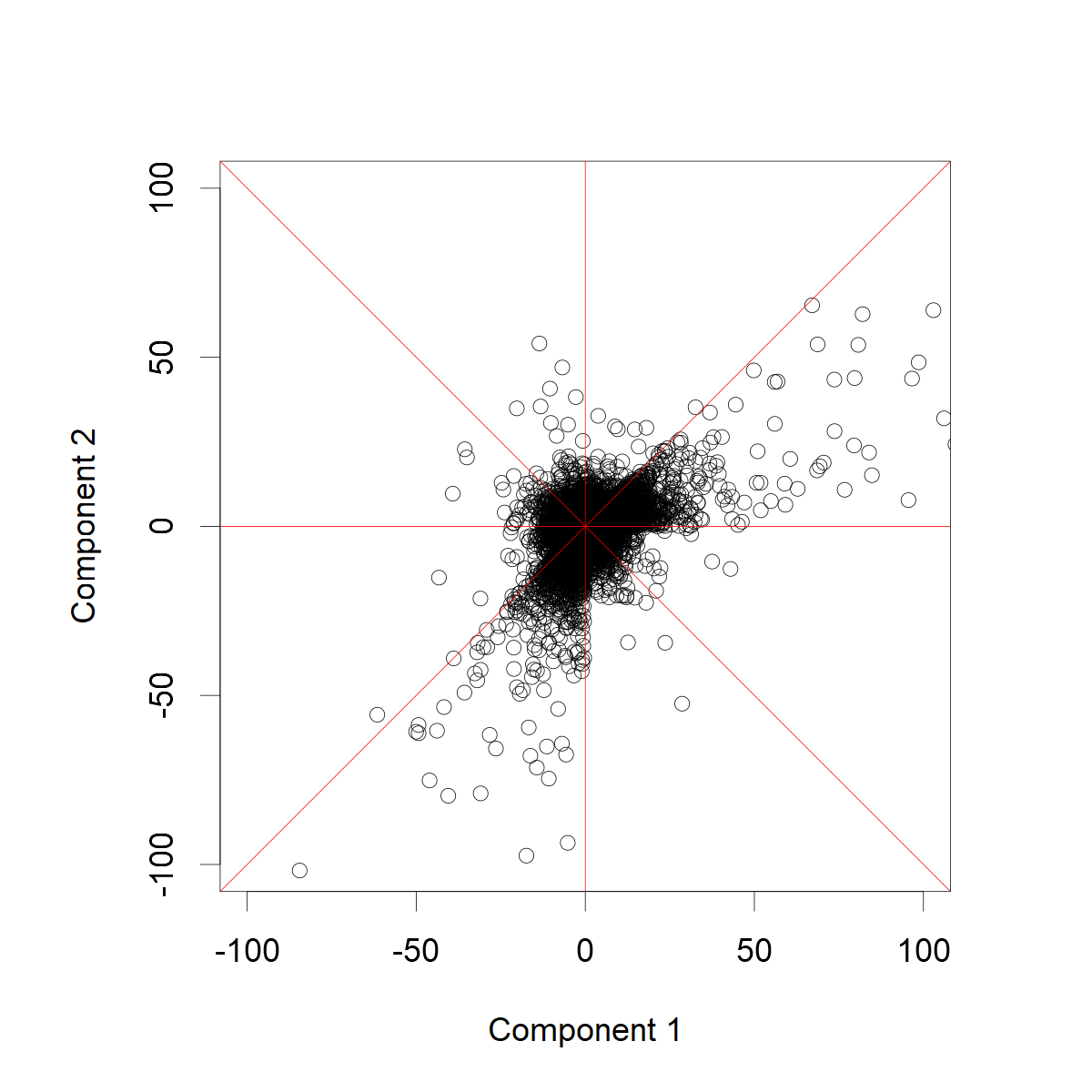}
    \caption{}
    \label{dmulti:4}
  \end{subfigure}
    \caption{Projected original data with Pareto distributed radial components. The heaviest tail has index $2$ and the lighter tails have tail index values $2.3$, $2.4$, $2.5$ and $3$ in subfigures \ref{dmulti:1}, \ref{dmulti:2}, \ref{dmulti:3}, \ref{dmulti:4},  respectively. The same random seed is used in all simulations.}
    \label{fig:danishmulti}
\end{figure}

\begin{figure}[h]
  \begin{subfigure}[b]{0.24\textwidth}
    \includegraphics[width=\textwidth]{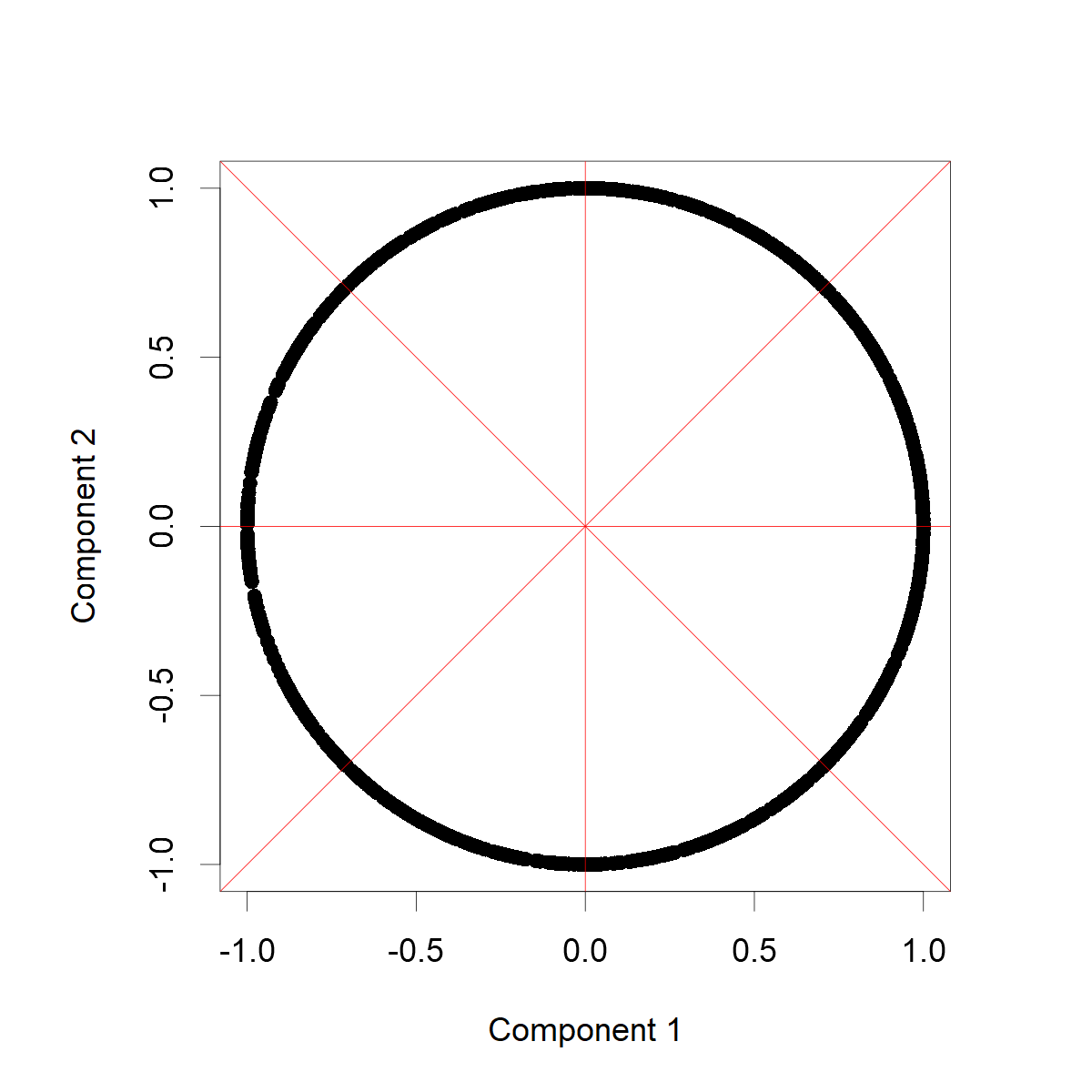}
    \caption{}
    \label{dmulti:12}
  \end{subfigure}
  \begin{subfigure}[b]{0.24\textwidth}
    \includegraphics[width=\textwidth]{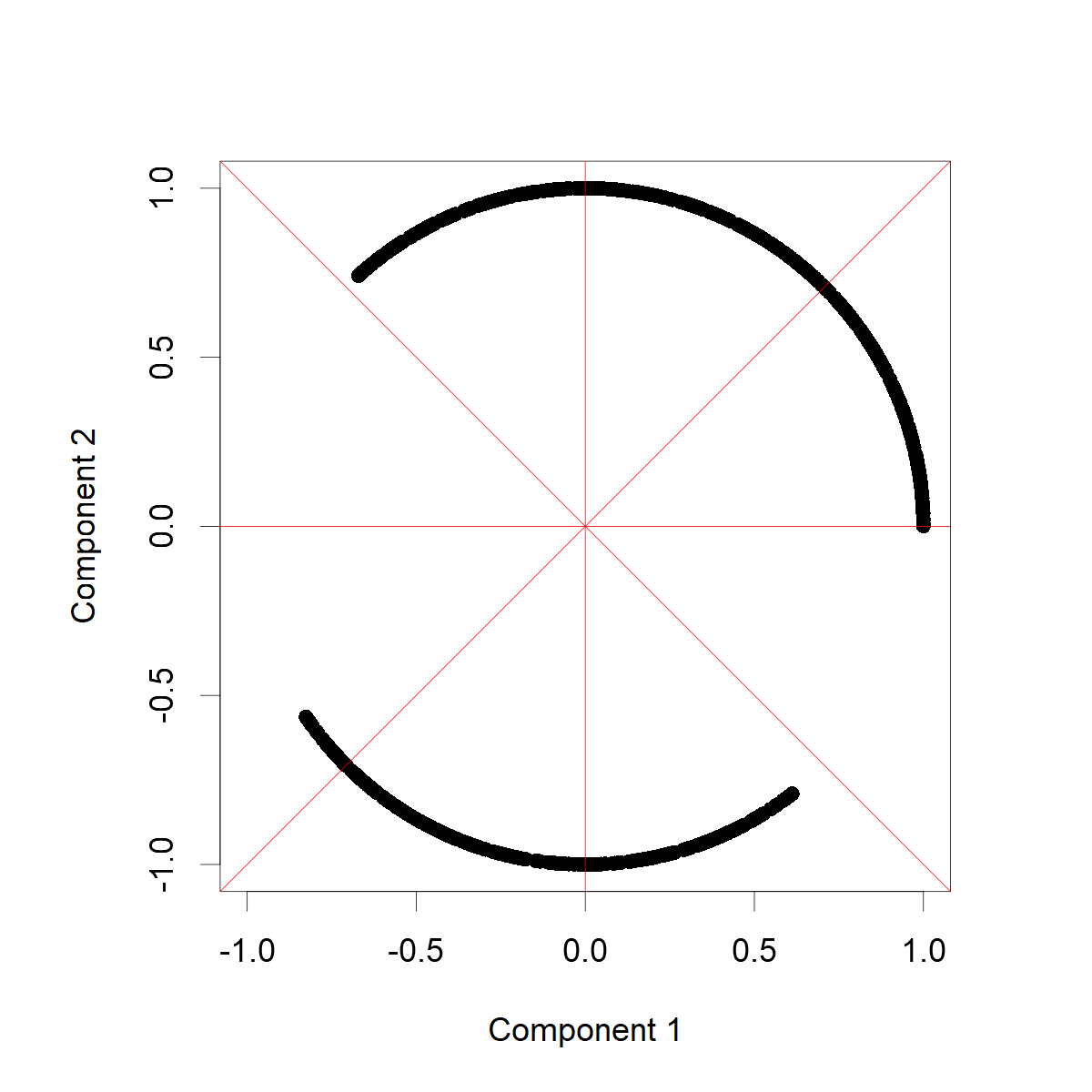}
    \caption{}
    \label{dmulti:22}
  \end{subfigure}
  \begin{subfigure}[b]{0.24\textwidth}
    \includegraphics[width=\textwidth]{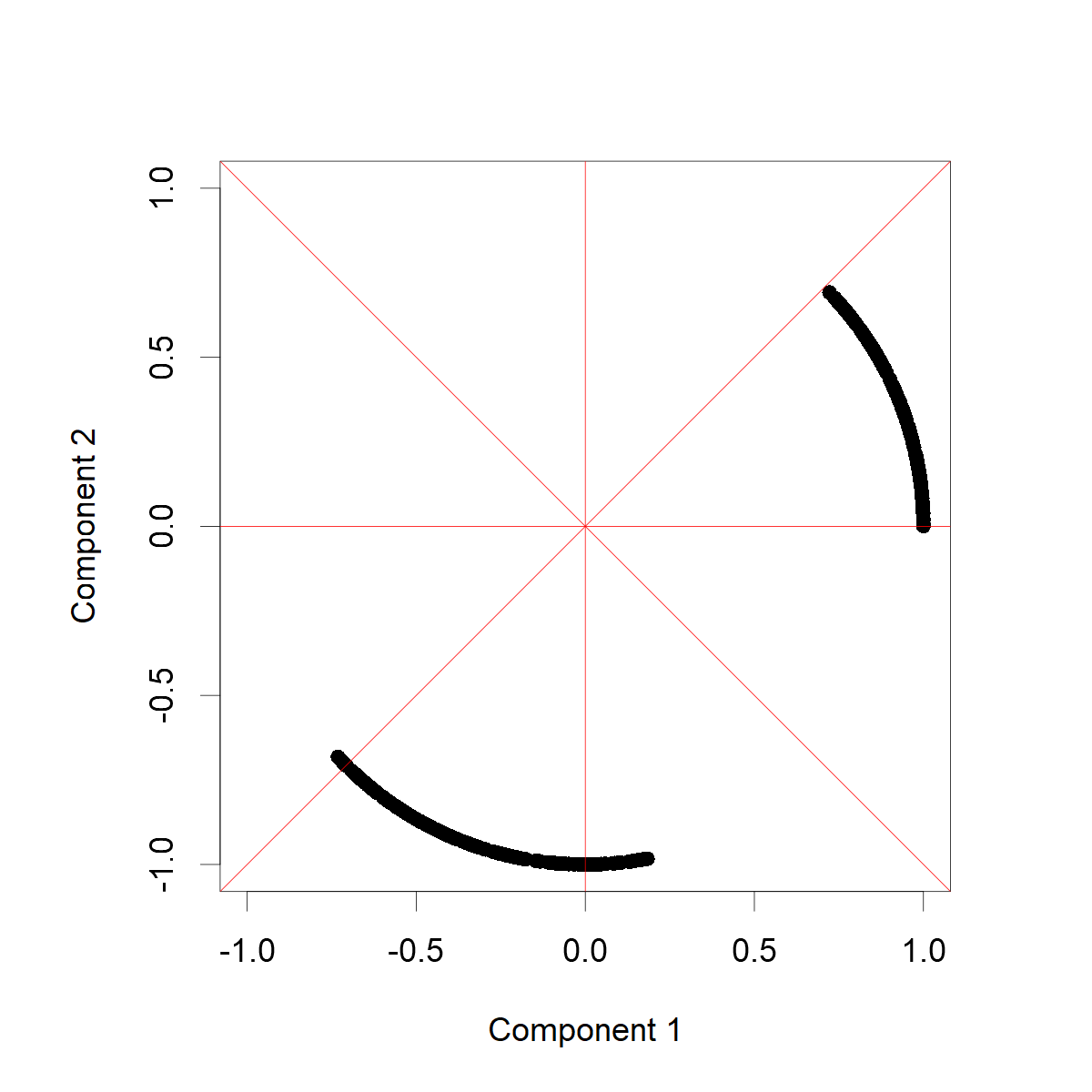}
    \caption{}
    \label{dmulti:32}
  \end{subfigure}
  \begin{subfigure}[b]{0.24\textwidth}
    \includegraphics[width=\textwidth]{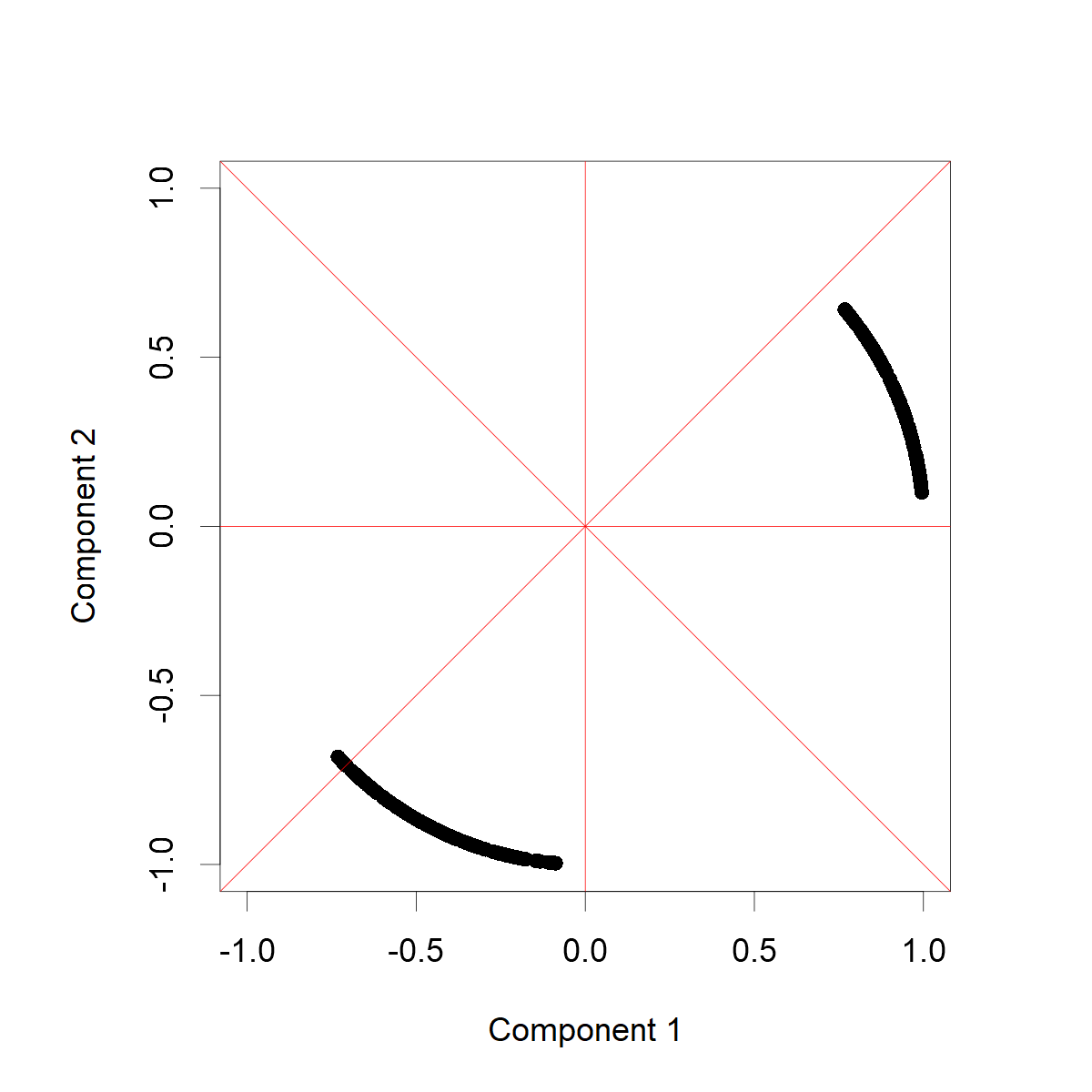}
    \caption{}
    \label{dmulti:42}
  \end{subfigure}
    \caption{The estimates for the riskiest directions are presented in Figures \ref{dmulti:12}, \ref{dmulti:22}, \ref{dmulti:32} and \ref{dmulti:42}. When the lighter tail parameter is very close to the value of the heavier parameter $2$, the algorithm cannot recover the set of riskiest directions with the given sample size and selected threshold $c$. When the lighter tail parameter is $2.5$ or larger the produced estimate is quite accurate.}
    \label{fig:danishmulti2}
\end{figure}

\subsection{Example with real data}

We use the algorithm with the same parameters as in Sections \ref{sec_simulation} and \ref{sec_pareto_detection} to study an actual data set. The data contains the daily changes in the prices of gold and silver over a time period ranging from December 3, 1973 to January
15, 2014. It is the same data set that was used in Section 4.5 of \cite{Lehtomaa3} and, consequently, the same modifications to the raw data were made. In particular, we used the logarithmic differences of daily prices in order to get a sequence of two-dimensional vectors that are approximately independent and identically distributed when we study the largest changes. 

We studied only the negative changes in daily prices, i.e.\ when both the prices of silver and gold declined. This decision was made in order to make the results comparable with the earlier results. In fact, the data set studied here is the data set pictured in Figure 9a of \cite{Lehtomaa3} except is contains a few more observations.

\begin{figure}[htb]
     \centering
     \begin{subfigure}[b]{0.49\textwidth}
         \centering
         \includegraphics[width=\textwidth]{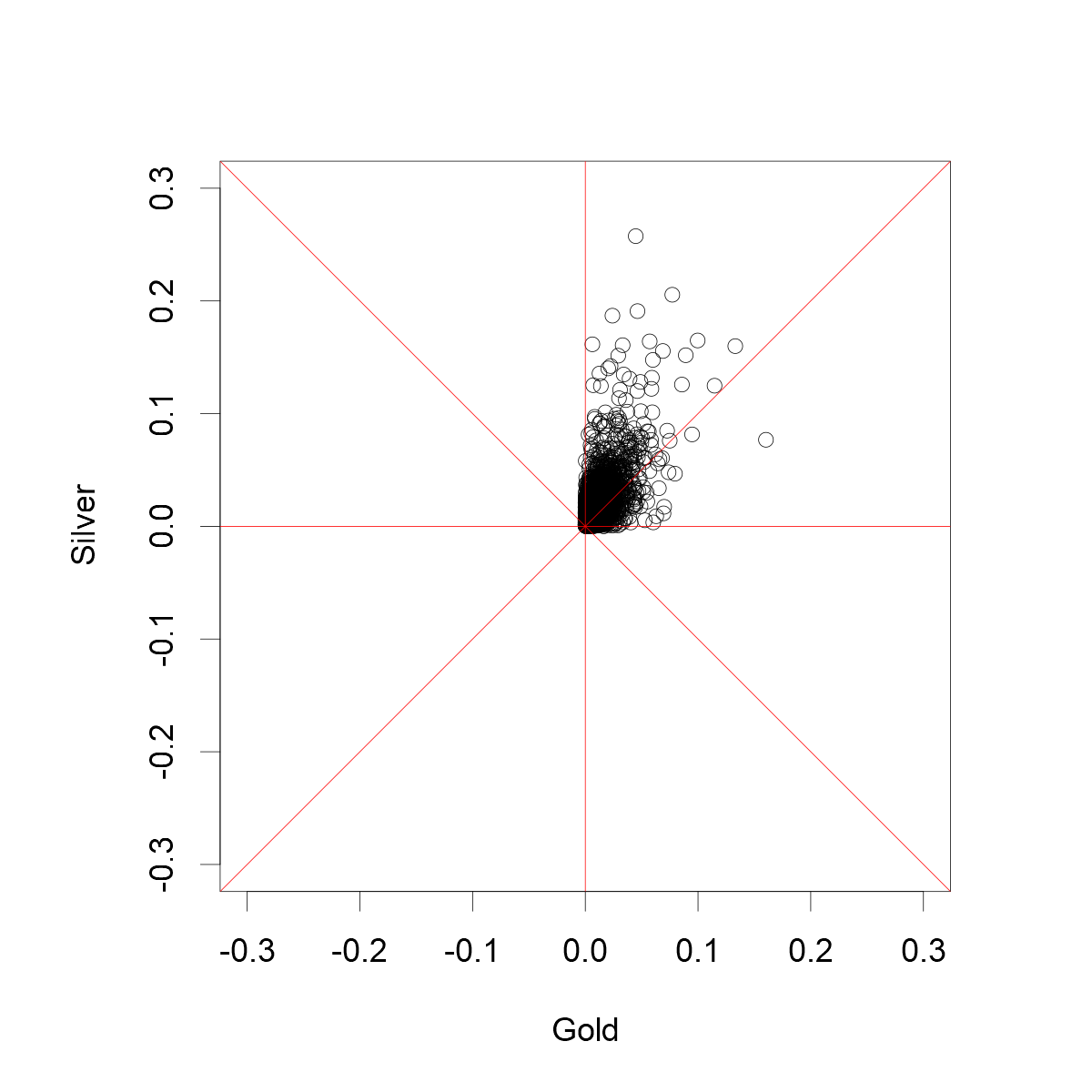}
         \caption{}
         \label{pic_results_real_a}
     \end{subfigure}
     \hfill
     \begin{subfigure}[b]{0.49\textwidth}
         \centering
         \includegraphics[width=\textwidth]{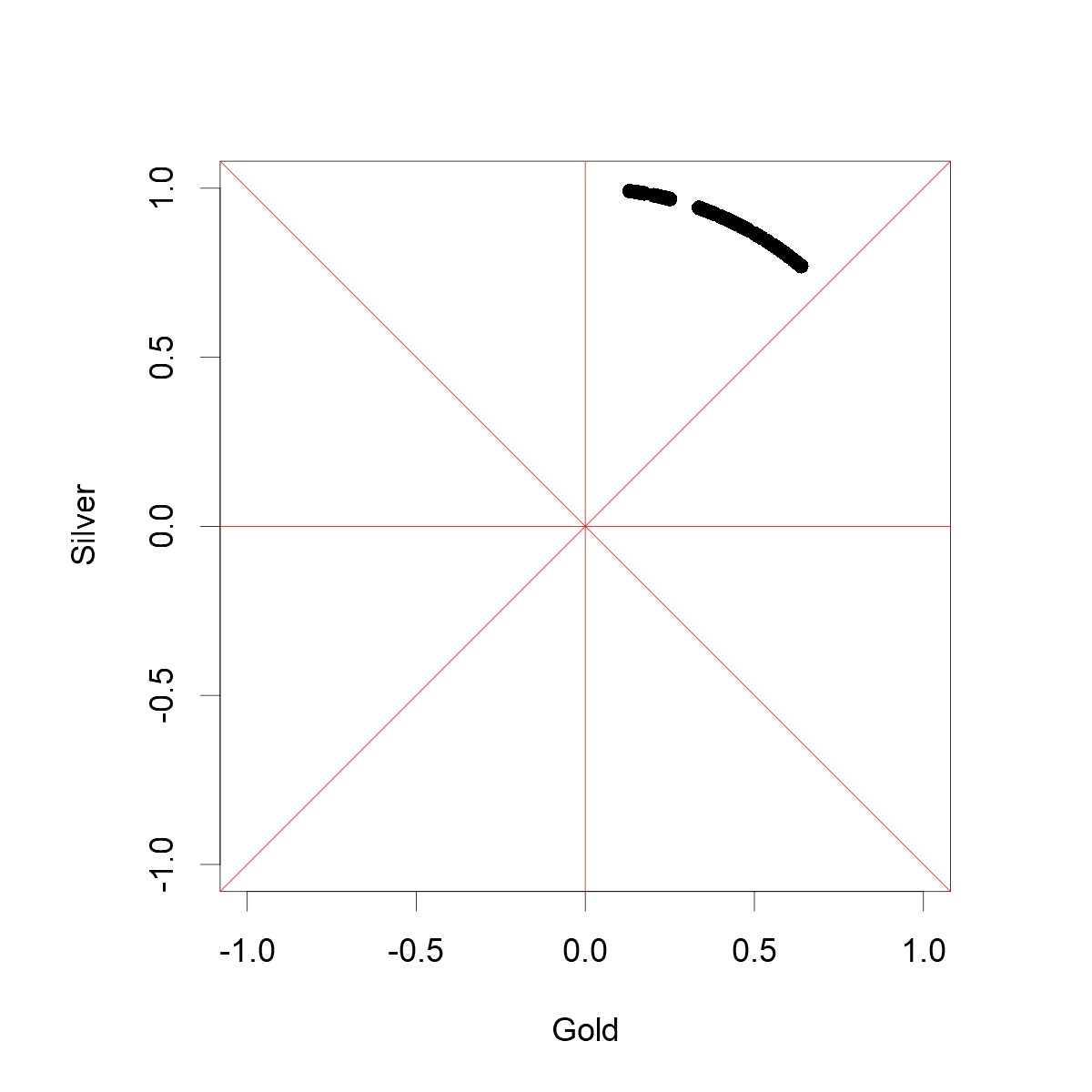}
         \caption{}
         \label{pic_results_real_b}
     \end{subfigure}
        \caption{A preliminary estimate for $S$ in a real data set based on daily changes in the prices of gold and silver is presented on the right. The left picture is a plot of the original data set.}
        \label{pic_results_real}
\end{figure}

Figure \ref{pic_results_real_b} shows the estimate for the riskiest directions. The estimate is consistent with the earlier result obtained in \cite{Lehtomaa3} in the sense that the riskiest observations appear to concentrate on a cone and riskiest observations are more concentrated above the diagonal than below it. It should be noted that the analysis here is performed using the euclidean distance while the analysis of \cite{Lehtomaa3} used the $l_1$ distance and diamond plots. 

In conclusion, a result that is consistent with the earlier study was obtained but without the need to verify the assumptions of the multivariate regularly varying distributions.

\subsection{On the interpretation of estimates}
In practice, we only have access to a finite amount of data. In addition, the user must select suitable values for the parameters $c$ and $k$ in \eqref{eq_testineq} and \eqref{eq_testineq2}. These seem to be the main challenges in accurate detection of the directional components with heaviest tails. Typically, the parameter values are found by experimentation. One can fix $k$ and increase the tolerance until some directions are accepted into the final estimate of $S$. 

We say that tail function $\overline{F_1}$ is heavier than $\overline{F_2}$ if there exists a number $x_0$ such that 
\begin{equation}\label{eq_heavier_tail}
   \overline{F_1}(x)>\overline{F_2}(x) 
\end{equation}
for all $x\geq x_0$. The problem with real data is that the number $x_0$ can be very large. Consequently, the largest observed data points might not be produced by the heaviest tails if the size of the data set is not sufficiently large. For example, lognormal distribution has heavier tail than Weibull distribution with parameter $\beta\in (0,1)$. If $\beta$ is close to $0$, a typical i.i.d.\ sample from these distributions could produce data where the points from Weibull distribution appear to be larger. In the multidimensional setting, the direction can affect the heaviness of the $R$-variable. For this reason, the interpretation of the estimates is the following. \emph{The estimator recovers the heaviest directional components with respect to the size of the data set. It does not exclude the possibility that there exist even heavier directional components than what is detected and which remain undetected due to the limited number of the data points.} To summarise, the estimator detects in which directions the heaviness comparable to the one-dimensional data produced by the normed observations is obtained.

\subsection{Risk ranking} 

Given an accurate estimate on $S$, we can form a new data set where the directions corresponding to the estimated $S$ have been removed. Given that the data satisfies the necessary assumptions, we can rerun the estimation to find the "next riskiest" directions. We can continue the process of deleting the estimated riskiest set even more than once. This could be viewed as finding the ranking of the riskiest sets on the unit sphere.  

In this process, it is possible that the second riskiest directions correspond to tails that do not belong to the same distribution family as the original set of the riskiest directions. For example, the tail of $R$ could have power law in the set of the riskiest directions and, say, a lognormal tail in the set of the next riskiest directions. 

If the random vector $\X$ has a multivariate regularly varying distribution, the aim is typically to estimate the support of the angular measure \cite{Resnick2}. In this case, the method described in Section \ref{sec_estimator} detects the set of riskiest directions which correspond to the tail index of the distribution. If the set $S$ is removed, the remaining set could still have power law behaviour in the sense of hidden regular variation \cite{Resnick2}. In this case, the preliminary empirical method suggested in Section \ref{sec_estimator} could reveal the set of the next riskiest directions which correspond to the tail index of the hidden multivariate regularly varying component.

\section{Conclusions}

It seems plausible that the theoretical result in Theorem \ref{thm_minimalset} could be used to create statistical estimators for the set $S$. For example, given an estimate for the set of riskiest directions, an insurance company with heavy-tailed total losses could find the root cause of heavy-tailedness. Here, the cause is found by identifying individual components or interaction between multiple components that produce heavy-tailedness to the total loss. If the company understands the set of riskiest directions, it is possible to formulate hedging or reinsurance strategies that potentially mitigate the largest risks. In this sense, understanding the set of riskiest directions tells why the entire vector is heavy-tailed and offers a strategy for reducing risk.

The presented results offer a starting point for creating rigorous statistical estimators with a clear work-flow that can be implemented as a computer algorithm. At first, one checks for the heavy-tailedness of the observations by calculating the empirical hazard function of the normed observations which produce a one-dimensional data set. Once the heaviness of this one-dimensional data has been established and the empirical hazard function turns out to be concave, we can search the directions where heaviness of the observations corresponds to the heaviness of the one-dimensional data. A way to implement the method is to study cones around fixed points and determine the size of each cone based on a given portion of the total observations, e.g.\ we can find the smallest cone that contains 10\% of the observations as in the presented examples. This avoids the problem encountered with grid-based methods where the space is divided into cells of equal size and where it is possible that some of the cells remain sparsely populated by the observations.

As the examples with simulated and real data show, an algorithm can be also implemented in the case where there are directions with only few observations or no observations at all.  Furthermore, the data does not have to be transformed or pre-processed before applying the method but it gives an idea where the riskiest directions are also in the case where, for instance, some components have a different Pareto indices than other components. 

Detecting small differences requires large amount of data. To us, this means that there must be more data if the tails associated with different directions are almost equally heavy. The presented idea works best if there exists one or more directions where the tail is substantially heavier than in other directions. As in earlier models, the practical application of the method requires some parameters to be set by the user. In particular, choosing the threshold $k$ is not easy, but this is a well-known problem which exists in different forms in most heavy-tailed modeling \cite{Nguyen1}. 

\section*{Acknowledgement}
Suggestions made by the reviewers helped to improve the manuscript.

\bibliographystyle{abbrv}
\bibliography{kirjallisuus1}

\end{document}